\theoremstyle{definition}
\newtheorem{defn}{Definition}[section]
\newtheorem{thm}[defn]{Theorem}
\newtheorem{cor}[defn]{Corollary}
\newtheorem{lem}[defn]{Lemma}
\newtheorem{ex}[defn]{Example}
\newtheorem{fact}[defn]{Fact}
\newtheorem{rem}[defn]{Remark}
\newtheorem{cla}[defn]{Claim}
\newtheorem{introthm}{Theorem}
\newtheorem{introcor}{Corollary}
\newcommand{\mf}[1]{{\mathfrak{#1}}}
\newcommand{\mb}[1]{{\mathbf{#1}}}
\newcommand{\bb}[1]{{\mathbb{#1}}}
\newcommand{\ceil}[1]{\lceil #1 \rceil}
\newcommand{\floor}[1]{\lfloor #1 \rfloor}
\DeclareMathOperator{\fpt}{fpt}
\DeclareMathOperator{\len}{length}
\DeclareMathOperator{\rank}{rank}
\begin{document}

\title{Diagonal F-thresholds and F-pure thresholds of Hibi rings}
\author{Takahiro Chiba}
\address[Takahiro Chiba]{Graduate School of Mathematics, Nagoya University,
         Nagoya 464--8602, Japan}
\email{m08023c@math.nagoya-u.ac.jp}
\author{Kazunori Matsuda}
\address[Kazunori Matsuda]{Graduate School of Mathematics, Nagoya University,
         Nagoya 464--8602, Japan}
\email{d09003p@math.nagoya-u.ac.jp}
\subjclass{Primary 13A35; Secondary 05E40.}
\date{\today}
\keywords{$F$-threshold, $F$-pure threshold, a-invariant, Hibi ring}


\begin{abstract}
Hibi rings are a kind of graded toric ring on a finite distributive lattice $D = J(P)$, where $P$ is a partially ordered set. 
In this paper, we compute diagonal $F$-thresholds and $F$-pure thresholds of Hibi rings. 
\end{abstract}

\maketitle


\section*{Introduction}


\par
In this paper, we study two invariants of commutative Noetherian rings of positive characteristic, that is, the $F$-threshold and the $F$-pure threshold. 

In \cite{MTW}, Musta\c{t}\u{a}, Takagi and Watanabe defined the notion of $F$-thresholds for $F$-finite regular local rings. 
And in \cite{HMTW}, Huneke, Musta\c{t}\u{a}, Takagi and Watanabe generalized it in more general situation. 

In higher dimensional algebraic geometry over a field $k$ of characteristic $0$, the log canonical thresholds are important objects.
In \cite{TW}, Takagi and Watanabe introduced the notion of the $F$-pure threshold, which is an analogue of the log canonical threshold.

Firstly, we recall the definition of $F$-threshold. 
Let $(R, \mf{m})$ be an $F$-finite $F$-pure local domain or a standard graded $k$-algebra with the unique graded maximal ideal $\mf{m}$, of characteristic $p > 0$. 
Then the following limit value exists(see \cite{HMTW}):
\[
c^{\mf{m}}(\mf{m}) = \lim_{e \to \infty} \frac{\max\{r \in \bb{N} \mid \mf{m}^{r} \not\subset \mf{m}^{[p^{e}]}\}}{p^{e}}, 
\]
where $\mf{m}^{[p^{e}]} = (x^{p^{e}} \mid x \in \mf{m})$. 
We call it the {\em diagonal F-threshold} of $R$. 

Secondly, we recall the definition of $F$-pure threshold. 
Let $t \ge 0$ be a real number and $\mf{a}$ a nonzero ideal of $R$. 
The pair $(R, \mf{a}^{t})$ is said to be {\em F-pure} if for all large $q = p^{e} \gg 0$, there exists an element $d \in \mf{a}^{\floor{t(q - 1)}}$ such that $R \to R^{1/q} (1 \mapsto d^{1/q})$ splits as an $R$-linear map. 
Then the {\em F-pure threshold}, denoted by $\fpt(\mf{a})$, is defined by

\begin{center}
$\fpt(\mf{a}) = \sup\{t \in \bb{R}_{\ge 0} \mid$ the pair $(R, \mf{a}^{t})$ is $F$-pure$\}$. 
\end{center}

There are a few examples of these invariants.
Hence it seems to be important to compute $F$-thresholds and $F$-pure thresholds concretely for several rings. 
In \cite{MOY}, the second author, Ohtani and Yoshida computed diagonal $F$-thresholds and $F$-pure thresholds for binomial hypersurfaces.

In this paper, we pick up Hibi rings. 
We compute diagonal $F$-thresholds $c^{\mf{m}}(\mf{m})$ and $F$-pure thresholds $\fpt(\mf{m})$ of such rings and describe these invariants in terms of poset. 

Let $P$ be a finite partially ordered set(poset for short), and $\mathcal{R}_{k}[D]$ the Hibi ring over a field $k$ of characteristic $p > 0$ on a distributive lattice $D = J(P)$, where $J(P)$ is the set of all poset ideals of $P$. 

The main theorem in this paper is the following:

\begin{introthm}(see Theorem 2.4, Theorem 3.9 and Corollary 4.2)
Let $R = \mathcal{R}_{k}[D]$ be a Hibi ring, and $\mf{m} = R_{+}$ the unique graded maximal ideal of $R$. Then
	\begin{align*}
		c^{\mf{m}}(\mf{m}) &= \rank^{*} P + 2,\\
		-a(R) &= \rank P + 2,\\
		\fpt(\mf{m}) &= \rank_{*} P + 2.
	\end{align*}
In particular, 
\begin{enumerate}
	\item $c^{\mf{m}}(\mf{m})$, $\fpt(\mf{m}) \in \bb{N}$, 
	\item $\fpt(\mf{m}) \le \min\{\len C \mid C \in \mathcal{C}\} + 2 \le -a(R) = \max\{\len C \mid C \in \mathcal{C}\} + 2$
	$\le c^{\mf{m}}(\mf{m})$, 
	where $a(R)$ denotes the $a$-invariant of $R$ (see \cite{GW}) and $\mathcal{C}$ denotes the set of all maximal chains of $P$.
\end{enumerate}
\end{introthm}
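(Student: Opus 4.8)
The plan is to prove the three displayed equalities by three separate arguments that all rest on the same structural fact, and then to read off the two numbered consequences from the formulas together with elementary inequalities between the rank invariants. The common foundation I would record first is that $R = \mca{R}_k[D]$ is a normal Cohen--Macaulay standard graded affine semigroup ring, realized as the Ehrhart ring of the order polytope $\mca{O}(P) \subset [0,1]^P$; this lets me translate each ring-theoretic invariant into a lattice-point or linear-programming problem on the cone over $\mca{O}(P)$.

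For the $a$-invariant I would invoke the Danilov--Stanley description of the graded canonical module of a normal toric ring: $\omega_R$ is spanned by the monomials corresponding to lattice points in the relative interior of the cone, so $-a(R)$ equals the least $n \ge 1$ for which $n\,\mca{O}(P)$ contains an interior lattice point. Such a point is precisely a strictly monotone map $P \to \{1,\dots,n-1\}$, and along a chain of length $\ell$ this forces $\ell+1$ distinct intermediate values, so the least admissible $n$ is $\max\{\len C \mid C \in \mathcal{C}\} + 2$. Since $\rank P$ is by definition the length of a longest chain, which is a longest maximal chain, this yields $-a(R) = \rank P + 2$ and simultaneously the middle equality $-a(R) = \max\{\len C \mid C \in \mathcal{C}\} + 2$ of consequence~(2).

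The two Frobenius invariants I would compute on the same cone. For the diagonal $F$-threshold, $\mf{m}^{[q]}$ (with $q = p^e$) is the monomial ideal generated by the $q$-th powers of the degree-one generators, so $\max\{r \mid \mf{m}^r \not\subset \mf{m}^{[q]}\}$ is the top degree of a semigroup element lying in the cone but in none of the translates $q\,e_i + C$; dividing by $q$ and taking the limit gives a linear program on $\mca{O}(P)$ whose optimum I expect to identify with $\rank^* P + 2$. For the $F$-pure threshold I would instead use that toric rings are $F$-split, together with the monomial (Fedder-type) criterion for splitting $R \to R^{1/q}(1 \mapsto d^{1/q})$ with $d \in \mf{m}^{\floor{t(q-1)}}$; the largest admissible $t$ is governed by the ``thinnest'' direction of the cone and should come out to $\rank_* P + 2$. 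In both cases the real work is to match a limit of Frobenius data with a purely order-theoretic extremum.

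Finally, consequence~(1) is immediate, since $\rank^* P$ and $\rank_* P$ are integers and the formulas present $c^{\mf{m}}(\mf{m})$ and $\fpt(\mf{m})$ as integers plus $2$. Consequence~(2) then reduces to the three combinatorial facts $\rank_* P \le \min\{\len C \mid C \in \mathcal{C}\}$, $\rank P = \max\{\len C \mid C \in \mathcal{C}\}$, and $\max\{\len C \mid C \in \mathcal{C}\} \le \rank^* P$, which follow directly from the definitions of $\rank_* P$ and $\rank^* P$; substituting the three equalities then yields the full chain. The main obstacle is the third paragraph: identifying the Frobenius-limit quantities with $\rank^* P$ and $\rank_* P$, and in particular exhibiting explicit splittings that realize $\rank_* P + 2$ for $\fpt(\mf{m})$ while proving no larger value is attainable.
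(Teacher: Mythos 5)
Your framework (order polytope, cone, Danilov--Stanley for the canonical module) is sound, and two pieces of the proposal do go through: the computation $-a(R)=\rank P+2=\max\{\len C\mid C\in\mathcal{C}\}+2$ via interior lattice points of dilates of the order polytope is a correct (and slightly different) route from the paper, which instead quotes the Bruns--Herzog formula for ASLs; and consequences (1) and (2) do follow formally from the three displayed formulas together with $\rank_* P\le\min\{\len C\}\le\rank P=\max\{\len C\}\le\rank^* P$, which hold because every maximal chain is a maximal path satisfying the paper's condition (*) with $\len^*=\len$.

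However, there is a genuine gap: the two equalities $c^{\mf{m}}(\mf{m})=\rank^* P+2$ and $\fpt(\mf{m})=\rank_* P+2$ are never proved --- you state that you ``expect to identify'' the relevant optima with these quantities and explicitly defer this as ``the real work'' and ``the main obstacle.'' That identification is essentially the entire content of the paper (Sections 2 and 3), and it is not a routine linear program on $\mathcal{O}(P)$. The invariants $\rank^* P$ and $\rank_* P$ are extrema of $\len^*$ over maximal \emph{paths} in the Hasse diagram that may alternate between ascending and descending segments, subject to the non-trivial condition (*); a chain-based or facet-based LP would only produce $\rank P$, which is strictly smaller than $\rank^* P$ in general (e.g.\ the paper's Example 1.11, where $\rank^* P=3>\rank P=2$). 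Concretely, the lower bound $c^{\mf{m}}(\mf{m})\ge\rank^* P+2$ requires exhibiting, for a condition-(*) path $C$ with $\len^* C=\rank^* P$, the explicit monomial $M=\prod_{I=\emptyset,I_1,\dots,I_m}\varphi(I)^{Q-1}$ built from a nested chain of poset ideals generated along $C$, and verifying $M\notin\mf{m}^{[Q]}$; the upper bound requires the function $d_M$ and Fact 2.8; and the $\fpt$ computation requires first reducing to the optimization $\max\{\psi(-\infty)\mid\psi\in\Sigma\}$ over functions with $\psi(y)-\psi(x)\le 1$ on covers and then realizing the optimum by a path constructed from the layering $\Lambda_i,\lambda_i$ of $\overline{P}$. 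None of these ideas, nor any substitute for them, appears in the proposal, so as written it establishes only the $a$-invariant formula and the formal deductions, not the two Frobenius-theoretic formulas that the theorem is actually about.
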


Recently, this inequality $\fpt(\mf{m}) \le -a(R) \le c^{\mf{m}}(\mf{m})$ was proved by Hirose, Watanabe and Yoshida for any homogeneous toric ring $R$ (see \cite{HWY}). 

In \cite{Hir}, Hirose gave formulae of $F$-thresholds and $F$-pure thresholds for any homogeneous toric ring $R$. 
However, it seems to be difficult for us to construct many examples by his formula. 
For Hibi rings, we give formulae of $c^{\mf{m}}(\mf{m})$ and $\fpt(\mf{m})$ in terms of poset, that is, the upper rank (denoted by $\rank^{*}$) and the lower rank (denoted by $\rank_{*}$). 
Thanks to these formulae, we can find enough examples. 
More precisely, for given integers $a \ge b\ge c \ge 1$, we can find a connected poset $P$ such that $\rank^{*} P = a, \rank P = b$ and $\rank_{*} P = c$ (see Example 4.4). 

As a corollary, we give formulae of $c^{\mf{m}}(\mf{m})$ and $\fpt(\mf{m})$ of Segre products of two polynomial rings. 
Segre products are important objects in commutative ring theory and combinatorics. 

\begin{introcor}
Let $k$ be a perfect field of positive characteristic, and let
$m,n \ge 2$ be integers. 
Let $R=k[X_1, \ldots, X_m], S=k[Y_1, \ldots, Y_n]$ be polynomial rings, and let $R \# S$ be the Segre product of $R$ and $S$. 
Let $\mf{m}$ be the unique graded maximal ideal of $R \# S$. 
Then
	\begin{align*}
		c^{\mf{m}}(\mf{m}) = -a(R \# S) &= \max\{m, n\},\\
		\fpt(\mf{m}) &= \min\{m, n\}. 
	\end{align*}
In particular, $c^{\mf{m}}(\mf{m}) = \fpt(\mf{m})$ if and only if $m = n$.
\end{introcor}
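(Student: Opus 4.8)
The plan is to exhibit $R \# S$ as a Hibi ring and then read off all three invariants from the main theorem, so that the corollary reduces to computing two poset invariants for one explicit poset.

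First I would recall the classical toric identification of the Segre product with a Hibi ring. The monomials $X_iY_j$ ($1 \le i \le m$, $1 \le j \le n$) generate $R \# S$, and sending the lattice element $(i,j)$ of the product of chains $[m] \times [n]$ to $X_iY_j$ identifies $R \# S$ with the Hibi ring $\mca{R}_{k}[[m] \times [n]]$. By Birkhoff's theorem the poset $P$ of join-irreducibles of $D = [m]\times[n]$ consists of the two ``axis'' chains $\{(i,1) : 2 \le i \le m\}$ and $\{(1,j) : 2 \le j \le n\}$, which are mutually incomparable; hence $P = C_{m-1} \sqcup C_{n-1}$ is the disjoint union of a chain with $m-1$ elements and a chain with $n-1$ elements. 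Thus $R \# S = \mca{R}_{k}[J(P)]$ and the hypotheses of the main theorem are met.

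Next I would compute the three invariants of $P = C_{m-1} \sqcup C_{n-1}$ from their definitions in Sections 2--4. This poset has exactly two maximal chains, namely its two components, of lengths $m-2$ and $n-2$. Therefore $\rank P = \max\{m-2, n-2\} = \max\{m,n\} - 2$. Since each element of $P$ lies on a single maximal chain, which already realizes the extreme rank on its own component, I expect both inequalities of the main theorem to collapse to equalities here: the upper rank reduces to the ordinary rank, $\rank^{*} P = \max\{m,n\} - 2$, while the lower rank is governed by the shorter component, $\rank_{*} P = \min\{m,n\} - 2$.

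Substituting into $c^{\mf{m}}(\mf{m}) = \rank^{*} P + 2$, $-a(R \# S) = \rank P + 2$ and $\fpt(\mf{m}) = \rank_{*} P + 2$ then gives $c^{\mf{m}}(\mf{m}) = -a(R \# S) = \max\{m,n\}$ and $\fpt(\mf{m}) = \min\{m,n\}$, and the final equivalence is immediate since $\max\{m,n\} = \min\{m,n\}$ holds exactly when $m = n$. The only genuine obstacle is the middle step: because $C_{m-1} \sqcup C_{n-1}$ is disconnected and non-graded (its two maximal chains have different lengths), I must unwind the definitions of $\rank^{*}$ and $\rank_{*}$ carefully to confirm that they take the extreme values $\max\{m,n\}-2$ and $\min\{m,n\}-2$ rather than some intermediate quantity. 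Everything else is either a classical identification or a direct substitution.
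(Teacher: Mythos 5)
Your proposal is correct and follows essentially the same route as the paper: the paper identifies $R \# S$ with the Hibi ring on the disjoint union of a chain with $m-1$ elements and a chain with $n-1$ elements (its Example 1.3) and then substitutes into the main theorem, exactly as you do. The ``obstacle'' you flag is genuinely harmless: since the Hasse diagram of $C_{m-1} \sqcup C_{n-1}$ is a disjoint union of two path graphs, the only maximal paths are the two chains themselves (both trivially satisfying condition (*)), so $\rank^{*} P = \max\{m,n\}-2$ and $\rank_{*} P = \min\{m,n\}-2$ as you claim.
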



Let us explain the organization of this paper.
In Section 1, we set up the notions of posets, and define the Hibi ring and $\rank^{*} P$ and $\rank_{*} P$ in order to state our main theorem.

In Section 2, we recall the definition and several basic results of $F$-threshold and give a formula of diagonal $F$-thresholds $c^{\mf{m}}(\mf{m})$ for Hibi rings.

In Section 3, we recall the definition of $F$-pure threshold and give a formula of $F$-pure thresholds $\fpt(\mf{m})$ for Hibi rings.  

In Section 4, we compute a-invariants $a(R)$ for Hibi rings and compare $c^{\mf{m}}(\mf{m})$ and $\fpt(\mf{m})$ with $-a(R)$. 
Moreover, for given integers $a \ge b\ge c \ge 1$, we find a connected poset $P$ such that $\rank^{*} P = a, \rank P = b$ and $\rank_{*} P = c$. 


\section{Preliminaries}
\par
First, we set up the notions of posets and define the Hibi ring.


Let $P = \{p_{1}, p_{2}, \ldots, p_{N}\}$ be a finite partially ordered set(poset for short). 
Let $J(P)$ be the set of all poset ideals of $P$, where a poset ideal of $P$ is a subset $I$ of $P$ such that if $x \in I$, $y \in P$ and $y \le x$ then $y \in I$.
By structure theorem of Birkhoff(see \cite{Bir}), for a distributive lattice $D$, there exists a poset $P$ such that $D \cong J(P)$ ordered by inclusion. 

A {\em chain} $X$ of $P$ is a totally ordered subset of $P$.
The {\em length} of a chain $X$ of $P$ is $\#X - 1$, where $\#X$ is the cardinality of $X$.
The {\em rank} of $P$, denoted by $\rank P$, is the maximum of the lengths of chains in $P$.
A poset is called {\em pure} if its all maximal chains have the same length.
For $x, y \in P$, we say that $y$ {\em covers} $x$, denoted by $x \lessdot y$, if $x < y$ and there is no $z \in P$ such that $x < z < y$.


\begin{defn}[\cite{Hib}]
Let the notation be as above. 
We consider the following map:
\[
\varphi : D( = J(P)) \hspace{5mm} \longrightarrow \hspace{5mm} K[T, X_{1}, \ldots, X_{N}]
\]

\hspace{45mm} \rotatebox{90}{$\in$} \hspace{42mm} \rotatebox{90}{$\in$}

\[
\hspace{8mm} I \hspace{16mm} \longmapsto \hspace{15mm} T \prod_{p_{i} \in I} X_{i} 
\]
Then we define the {\em Hibi ring} $\mathcal{R}_{k}[D]$ as follows:
\[
\mathcal{R}_{k}[D] = k[\varphi(I) \mid I \in J(P)].
\]
\end{defn}


\begin{ex}
Consider the following poset $P(1 \le 3, 2 \le 3 \ {\rm and}\ 2 \le 4)$:

$\ $

\begin{xy}
	\ar@{} (0, 0); (10, -16) = "A",
	\ar@{} "A" *{P = }; (24, -24) *++!R{1} *\cir<4pt>{} = "B",
	\ar@{-} "B"; (24, -8) *++!R{3} *\cir<4pt>{} = "C",
	\ar@{-} "C"; (48, -24) *++!L{2} *\cir<4pt>{} = "D",
	\ar@{-} "D"; (48, -8) *++!L{4} *\cir<4pt>{} = "E",
	\ar@{} "E"; (72, -16) *{J(P) = } = "F",
	\ar@{} "F"; (108, -16) *++!R{\{1, 2\}} *\cir<4pt>{} = "G",
	\ar@{} "G"; (132, -16) *++!L{\{2, 4\}} *\cir<4pt>{} = "H",
	\ar@{-} "G"; (120, -8) *++!L{\{1, 2, 4\}} *\cir<4pt>{} = "I",
	\ar@{-} "H"; "I",
	\ar@{} "I"; (120, -24) *++!L{\{2\}} *\cir<4pt>{} = "J",
	\ar@{-} "G"; "J",
	\ar@{-} "H"; "J",
	\ar@{} "I", (96, -8) *++!R{\{1, 2, 3\}} *\cir<4pt>{} = "K",
	\ar@{-} "G"; "K",
	\ar@{} "K", (108, 0) *++!D{\{1, 2, 3, 4\}} *\cir<4pt>{} = "L",
	\ar@{-} "I"; "L",
	\ar@{-} "K"; "L",
	\ar@{-} "G"; (96, -24) *++!R{\{1\}} *\cir<4pt>{} = "M",
	\ar@{-} "M"; (108, -32) *++!RU{\emptyset} *\cir<4pt>{} = "N",
	\ar@{-} "J"; "N",
	\end{xy}
Then we have
\[
\mathcal{R}_{k}[D] = k[T, TX_{1}, TX_{2}, TX_{1}X_{2}, TX_{2}X_{4}, TX_{1}X_{2}X_{3}, TX_{1}X_{2}X_{4}, TX_{1}X_{2}X_{3}X_{4}].
\]	
\end{ex}

\begin{ex}
Consider the following poset $P$:

\begin{xy}
	\ar@{} (0, 0); (42, -8) = "A",
	\ar@{} "A" *{P = }; (60, -18) *++!R{p_{1}} *\cir<4pt>{} = "B",
	\ar@{-} "B"; (60, -10) *++!R{p_{2}} *\cir<4pt>{} = "C",
	\ar@{-} "C"; (60, -6) = "D",
	\ar@{.} "D"; (60, -2) = "E",
	\ar@{-} "E"; (60, 2) *++!R{p_{m - 1}} *\cir<4pt>{}= "F",
	\ar@{} "F"; (80, -18) *++!L{q_{1}} *\cir<4pt>{} = "G",
	\ar@{-} "G"; (80, -10) *++!L{q_{2}} *\cir<4pt>{} = "H",
	\ar@{-} "H"; (80, -6) = "I",
	\ar@{.} "I"; (80, -2) = "J",
	\ar@{-} "J"; (80, 2) *++!L{q_{n - 1}} *\cir<4pt>{}	
\end{xy}

then $\mathcal{R}_{k}[D] \cong k[X]/I_{2}(X)$, where $X$ is an $m \times n$-matrix whose all entries are indeterminates. 

\end{ex}


\begin{rem}
	\begin{enumerate}
		\item (\cite{Hib}) Hibi rings are toric ASL, thus normal Cohen-Macaulay domains. 
		\item $\dim \mathcal{R}_{k}[D] = \# P + 1$. 
		\item (\cite{Hib}) $\mathcal{R}_{k}[D]$ is Gorenstein if and only if $P$ is pure. 
	\end{enumerate} 
\end{rem}


Finally, we define $\rank^{*} P$ and $\rank_{*} P$ for a poset $P$ in order to state our main theorem.

A sequence $C = (q_{1}, \ldots, q_{t})$ is called a {\em path} of $P$ if $C$ satisfies the following conditions: 

\begin{enumerate}
	\item $q_{1}, \ldots, q_{t}$ are distinct elements of $P$,
	\item $q_{1}$ is a minimal element of $P$ and $q_{t - 1} \lessdot q_{t}$,	
	\item $q_{i} \lessdot q_{i + 1}$ or $q_{i + 1} \lessdot q_{i}$.
	\end{enumerate}
In short, we regard the Hasse diagram of $P$ as a graph, and consider paths on it.
In particular, if $q_{t}$ is a maximal element of $P$, then we call $C$ {\em maximal} path.
For a path $C = (q_{1}, \ldots, q_{t})$, we denote $C = q_{1} \to q_{t}$. 

For a path $C = (q_{1}, \ldots, q_{t})$, $q_{i}$ is said to be a {\em locally maximal element} of $C$ if $q_{i - 1} \lessdot q_{i}$ and $q_{i + 1} \lessdot q_{i}$, and a {\em locally minimal element} of $C$ if $q_{i} \lessdot q_{i - 1}$ and $q_{i} \lessdot q_{i + 1}$. 
For convenience, we consider that $q_{1}$ is a locally minimal element and $q_{t}$ is a locally maximal element of $C$. 

For a path $C = (q_{1}, \ldots, q_{t})$, if $q_{1} \le \cdots \le q_{t}$ then we call $C$ an {\em ascending chain} and if $q_{1} \ge \cdots \ge q_{t}$ then we call $C$ a {\em descending chain}. 
We denote a ascending chain by a symbol $A$ and a descending chain by a symbol $D$.
For a ascending chain $A = (q_{1}, \ldots, q_{t})$, we put $t(A) = q_{t}$ and $<A> = \{q \in P \mid q \le t(A)\}$. 
Since $<A>$ is a poset ideal of $P$ generated by $A$, we note that $<A> \in J(P)$.

Let $C = (q_{1}, \ldots, q_{t})$ be a path. 
We now introduce the notion of the {\em decomposition} of $C$. 
We decompose $V(C)$ as follows:
\[
V(C) = V(A_{1}) \cup V(D_{1}) \cup V(A_{2}) \cup \cdots \cup V(D_{n - 1}) \cup V(A_{n})
\]
such that
\begin{align*}
	V(A_{1}) &= \{q_{1}, \ldots, q_{a(1)}\}, \\	
	V(D_{1}) &= \{q^{\prime}_{1}, \ldots, q^{\prime}_{d(1)}\}, \\
	V(A_{2}) &= \{q_{a(1) + 1}, \ldots, q_{a(2)}\}, 
\end{align*}

\vspace{2mm}

\begin{center}	
$\rotatebox{90}{$\cdots$}$
\end{center}
\begin{align*}
	\hspace{8mm} V(D_{n - 1}) &= \{q^{\prime}_{d(n - 2) + 1}, \ldots, q^{\prime}_{d(n - 1)}\}, \\
	V(A_{n}) &= \{q_{a(n - 1) + 1}, \ldots, q_{a(n)} = q_{t}\},
\end{align*}
where $\{q_{a(1)}, \ldots, q_{a(n)}\}$ is the set of locally maximal elements and $\{q_{1}, q^{\prime}_{d(1)}, \ldots, q^{\prime}_{d(n - 1)}\}$ is the set of locally minimal elements of $C$. 
Then $A_{i}$ are ascending chains and $D_{j}$ are descending chains.
This decomposition is denoted by $C = A_{1} + D_{1} + A_{2} + \cdots + D_{n - 1} + A_{n}$.

For a path $C = (q_{1}, \ldots, q_{t})$, we define {\em the upper length} by 
\[
\len^{*} C = \#\{(q_{i}, q_{i + 1}) \in E(C) \mid q_{i} \lessdot q_{i + 1}\}, 
\]
where $E(C)$ is the set of edges of $C$. 

\begin{ex}
	\begin{enumerate}
		\item If $C$ is a chain, then $\len^{*} C = \len C$.
		\item Consider the following path $C$:
		
			\begin{xy}
				\ar@{} (64, -8) *++!D{} *\cir<4pt>{} = "A";	
				\ar@{-}_{1} "A";(64, 0) *++!D{} *\cir<4pt>{} = "B";
				\ar@{-}_{2} "B";(64, 8) *++!D{} *\cir<4pt>{} = "C";
				\ar@{-} "C";(80, 0)	*++!D{} *\cir<4pt>{} = "D";
				\ar@{-} "D";(96, -8) *++!D{} *\cir<4pt>{} = "E";
				\ar@{-}^{3} "E";(96, 0) *++!D{} *\cir<4pt>{} = "F";
				\ar@{-}_{4} "F";(96, 8) *++!D{} *\cir<4pt>{}		
			\end{xy}
		
		\vspace{5mm}
		
		\noindent Then $\len^{*} C = 4$. 
	\end{enumerate}

\end{ex}


Next, we introduce the condition (*). 

\begin{defn}
For a path $C = (q_{1}, \ldots, q_{t})$, we say that $C$ {\em satisfies a condition} (*) if $C$ satisfies the following conditions: 

For the decomposition $C = A_{1} + D_{1} + \ldots + D_{n - 1} + A_{n}$, 

(1) $V(D_{i}) \cap \left(\bigcup^{i - 1}_{m = 1} <A_{m}> \cup <A_{i} \setminus t(A_{i})> \cup \{t(A_{i})\}\right) = \emptyset$,

(2) $V(A_{i + 1}) \cap \left(\bigcup^{i}_{m = 1} <A_{m}> \right) = \emptyset$

\noindent for all $1 \le i \le n - 1$. 
\end{defn}

\begin{rem}
At the above definition, condition (*) means as follows: 
assume that $C = (q_{1}, \ldots, q_{r - 1}, q_{r}, q_{r + 1}, \ldots)$ and $q_{r}$ is a locally maximal element or a locally minimal element of $C$. 
Then for all $s > r$ and $r > t$, $q_{s} \not\le q_{t}$.  
\end{rem}

\begin{rem}
For a path $C = (q_{1}, \ldots, q_{t})$ such that $C$ satisfies a condition (*) and $q_{t}$ is a locally maximal element, we can extend $C$ to a path $\tilde{C} = (q_{1}, \ldots, q_{t}, \ldots, q_{t^{\prime}})$ such that $\tilde{C}$ is a maximal path which satisfies a condition (*). 

Indeed, if $q_{t}$ is not a maximal element of $P$, then there exists $q_{t + 1}$ such that $q_{t} \lessdot q_{t + 1}$. 
We decompose $C = A_{1} + D_{1} + \ldots + D_{n - 1} + A_{n}$. 
If $q_{t + 1} \in \ <A_{i}>$ for some $i$, then so is $q_{t}$. 
This means that $C$ does not satisfy a condition (*), a contradiction. 
Hence a path $C^{\prime} = (q_{1}, \ldots, q_{t}, q_{t + 1})$ also satisfies a condition (*). 
Therefore, by repeating this operation, we can extend $C$ to a path $\tilde{C} = (q_{1}, \ldots, q_{t}, \ldots, q_{t^{\prime}})$ such that $\tilde{C}$ is a maximal path which satisfies a condition (*). 
\end{rem}

\begin{ex}
Consider the following poset $P$: 

	\begin{xy}
		\ar@{} (64, -8) *++!R{q_{1}} *\cir<4pt>{} = "A";
		\ar@{-} "A";(64, 0) *++!R{q_{2}} *\cir<4pt>{} = "B";
		\ar@{-} "B";(64, 8) *++!R{q_{3}} *\cir<4pt>{} = "C"
		\ar@{-} "C";(80, 0)	*++!D{q_{4}} *\cir<4pt>{} = "D";
		\ar@{-} "D";(80, -8) *++!L{q_{5}} *\cir<4pt>{} = "E";
		\ar@{-} "B";"E";
		\ar@{-} "E";(96, 0) *++!D{q_{6}} *\cir<4pt>{}		
	\end{xy}
	
	\vspace{5mm}
	
\noindent Then, $C_{1} = (q_{1}, q_{2}, q_{5}, q_{6})$ satisfies the condition (*), but $C_{2} = (q_{1}, q_{2}, q_{3}, q_{4}, q_{5}, q_{6})$ does not satisfy the condition (*) because $q_{2} \ge q_{5}$.
\end{ex}

Now, we define the upper rank $\rank^{*} P$ and the lower rank $\rank_{*} P$ for a poset $P$. 

\begin{defn}
For a poset $P$, we define
	\begin{align*}
		\rank^{*} P &= \max\{\len^{*} C \mid C {\rm\ is\ a\ maximal\ path\ which\ satisfies\ a\ condition} (*)\},\\
		\rank_{*} P &= \min\{\len^{*} C \mid C {\rm\ is\ a\ maximal\ path\ which\ satisfies\ a\ condition} (*)\}.
	\end{align*}
We call $\rank^{*} P$ {\em upper rank} and $\rank_{*} P$ {\em lower rank} of $P$.
We note that $\rank^{*} P \ge \rank P \ge \rank_{*} P$.
\end{defn}

\begin{ex}
Consider the following poset $P$:

	\begin{xy}
		\ar@{} (72, -8) *++!R{q_{1}} *\cir<4pt>{} = "A";
		\ar@{-} "A";(72, 0) *++!R{q_{2}} *\cir<4pt>{} = "B";
		\ar@{-} "B";(72, 8) *++!R{q_{3}} *\cir<4pt>{} = "C"
		\ar@{} "C";(88, 0)	*++!L{q_{5}} *\cir<4pt>{} = "D";
		\ar@{-} "D";(88, -8) *++!L{q_{4}} *\cir<4pt>{} = "E";
		\ar@{-} "B";"E";
		\ar@{-} "D";(88, 8) *++!L{q_{6}} *\cir<4pt>{}			
	\end{xy}

\vspace{3mm}

\noindent Then, the following paths satisfy the condition (*):

	\begin{xy}
		\ar@{} (0, -8) *++!R{q_{1}} *\cir<4pt>{} = "A";
		\ar@{-} "A";(0, 0) *++!R{q_{2}} *\cir<4pt>{} = "B";
		\ar@{-} "B";(0, 8) *++!R{q_{3}} *\cir<4pt>{} = "C";
		\ar@{} "A";(16, -8)
		
		\ar@{} (16, -8);(32, -8) *++!R{q_{1}} *\cir<4pt>{} = "D";
		\ar@{-} "D";(32, 0) *++!R{q_{2}} *\cir<4pt>{} = "E";
		\ar@{-} "E";(48, -8) *++!L{q_{4}} *\cir<4pt>{} = "F";
		\ar@{-} "F";(48, 0) *++!L{q_{5}} *\cir<4pt>{} = "G";
		\ar@{-} "G";(48, 8) *++!L{q_{6}} *\cir<4pt>{} = "H";
		\ar@{} "F";(64, -8)
		
		\ar@{} (64, -8);(80, 0) *++!R{q_{2}} *\cir<4pt>{} = "I";
		\ar@{-} "I";(80, 8) *++!R{q_{3}} *\cir<4pt>{} = "J";
		\ar@{-} "I";(96, -8) *++!L{q_{4}} *\cir<4pt>{} = "K";
		\ar@{} "K";(112, -8)
		
		\ar@{} (112, -8);(128, -8) *++!R{q_{4}} *\cir<4pt>{} = "L";
		\ar@{-} "L";(128, 0) *++!R{q_{5}} *\cir<4pt>{} = "M";
		\ar@{-} "M";(128, 8) *++!R{q_{6}} *\cir<4pt>{} = "N";
		\ar@{} "N";(136, -8)
				
		\end{xy}
		
\vspace{3mm}

\noindent Hence we have $\rank^{*} P = 3$ and $\rank_{*} P = \rank P = 2$. 
			
\end{ex}


\section{Diagonal F-thresholds of Hibi rings}
\par
In this section, we recall the definition and several basic results 
of $F$-threshold and give a proof of the main theorem.

\subsection{Definition and basic properties}

Let $R$ be a Noetherian ring of characteristic $p > 0$ with $\dim R = d \ge 1$.
Let $\mf{m}$ be a maximal ideal of $R$. 
Suppose that $\mf{a}$ and $J$ are $\mf{m}$-primary ideals of $R$ such that $\mf{a} \subseteq \sqrt{J}$ and $\mf{a} \cap R^{\circ} \neq \emptyset$, where $R^{\circ}$ is the set of elements of $R$ that are not contained in any minimal prime ideal of $R$. 


\begin{defn}
Let $R, \mf{a}, J$ be as above. For each nonnegative integer $e$, put $\nu_{\mf{a}}^{J}(p^{e}) = \max \{ r \in \bb{N} \mid \mf{a}^{r} \not\subseteq J^{[p^{e}]}\}$, where $J^{[p^{e}]} = (a^{p^{e}} \mid a \in J)$.
Then we define 
\[
c^{J}(\mf{a}) = \lim_{e \to \infty} \frac{\nu_{\mf{a}}^{J}(p^{e})}{p^{e}}
\]
if it exists, and call it the $F$-{\em threshold} of the pair $(R, \mf{a})$ with respect to $J$. 
Moreover, we call $c^{\mf{a}}(\mf{a})$ the {\em diagonal} $F$-{\em threshold} of $R$ with respect to $\mf{a}$. 

For convenience, we put
\[
c_{+}^{J}(\mf{a)} = \limsup_{e \to \infty} \frac{\nu_{\mf{a}}^{J}(p^{e})}{p^{e}}, \hspace{6mm} c_{-}^{J}(\mf{a)} = \liminf_{e \to \infty} \frac{\nu_{\mf{a}}^{J}(p^{e})}{p^{e}}.
\]
\end{defn}

About basic properties and examples of $F$-thresholds, see \cite{HMTW}.
In this section, we summarize basic properties of the diagonal $F$-thresholds $c^{\mf{m}}(\mf{m})$. 


\begin{ex}
	\begin{enumerate}
		\item Let $(R, \mf{m})$ be a regular local ring of positive characteristic. Then $c^{\mf{m}}(\mf{m}) = \dim R.$
		\item Let $k[X_{1}, \ldots, X_{d}]^{(r)}$ be the $r$-th Veronese subring of a polynomial ring $S = k[X_{1}, \ldots, X_{d}]$. Put $\mf{m} = (X_{1}, \ldots, X_{d})^{r}R$. Then $c^{\mf{m}}(\mf{m}) = \frac{r + d - 1}{r}$.
		 
		\item ([MOY, Corollary 2.4]) If $(R, \mf{m})$ is a local ring with $\dim R = 1$, then $c^{\mf{m}}(\mf{m}) = 1$.
		
	\end{enumerate}
\end{ex}

\begin{ex}([MOY, Theorem 2])
Let $S = k[X_{1}, \ldots, X_{m}, Y_{1}, \ldots, Y_{n}]$ be a polynomial ring  over $k$ in $m + n$ variables, and put $\mf{n} = (X_{1}, \ldots, X_{m}, Y_{1}, \ldots, Y_{n})S$. 
Take a binomial $f = X_{1}^{a_{1}} \cdots X_{m}^{a_{m}} - Y_{1}^{b_{1}} \cdots Y_{n}^{b_{n}} \in S$, where $a_{1} \ge \cdots \ge a_{m}, b_{1} \ge \cdots \ge b_{n}$. 
Let $R = S_{\mf{n}}/(f)$ be a binomial hypersurface local ring with the unique maximal ideal $\mf{m}$. Then
\[
c^{\mf{m}}(\mf{m}) = m + n - 2 + \frac{\max\{a_{1} + b_{1} - \min\{\sum_{i = 1}^{m} a_{i}, \sum_{j = 1}^{n} b_{j}\}, 0\}}{\max\{a_{1}, b_{1}\}}.
\]		
\end{ex}


\subsection{Proof of the main theorem}
\par
In this subsection, we give a proof of the main theorem. 
Recall Theorem 1:

\begin{thm}
Let $P$ be a finite poset, and $D = J(P)$ the distributive lattice. Put $R = \mathcal{R}_{k}[D]$. Let $\mf{m} = R_{+}$ be the graded maximal ideal of $R$. Then
\[
c^{\mf{m}}(\mf{m}) = \rank^{*} P + 2.
\]  
\end{thm}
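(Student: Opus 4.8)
The plan is to convert the question into combinatorics on the poset $\hat{P} = P \cup \{\hat{0}\}$ obtained by adjoining a bottom element, and then solve the resulting discrete optimization. First I would record the semigroup description of the Hibi ring: a monomial $T^{r}\prod_{i} X_{i}^{c_{i}}$ lies in $R$ exactly when $p_{i} \mapsto c_{i}$ is order-reversing on $P$ (so $p \le p'$ forces $c_{p} \ge c_{p'}$) and $r \ge \max_{i} c_{i}$. Setting $c(\hat{0}) = r$, such a monomial is the same datum as an order-reversing function $c : \hat{P} \to \bb{N}$, and the generator $\varphi(I)$ is the indicator of the poset ideal $I \cup \{\hat{0}\}$. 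Since $R$ is normal toric, $\mf{m}^{[q]}$ (with $q = p^{e}$) is a monomial ideal, and a direct divisibility computation shows $T^{r}\prod X_{i}^{c_{i}} \in \mf{m}^{[q]}$ if and only if there is a poset ideal $J \ni \hat{0}$ of $\hat{P}$ with $c(x) \ge q$ for every $x \in J$ and $c(x) - c(y) \ge q$ for every cover $x \lessdot y$ with $x \in J$, $y \notin J$ (order-reversibility of $c - q\mathbf{1}_{J}$ then supplies the remaining degree inequality $r-q \ge \max_{y \notin J} c(y)$ automatically). Hence $\nu_{\mf{m}}^{\mf{m}}(q)$ is the largest value $c(\hat{0})$ over all order-reversing $c$ that escape this condition for every such $J$, and $c^{\mf{m}}(\mf{m}) = \lim_{e} \nu_{\mf{m}}^{\mf{m}}(q)/q$.

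For the lower bound I would take a maximal path $C$ with $\len^{*} C = \rank^{*} P$ that satisfies condition (*), invoking the earlier observation that a condition-(*) path ending at a locally maximal element extends to a maximal such path, and build an explicit ``staircase''. Reading $C$ through its decomposition $A_{1} + D_{1} + \cdots + A_{n}$, I set $c = q-1$ at the top peak $t(A_{n})$, make $c$ drop by exactly $q-1$ across every up-edge $q_{i} \lessdot q_{i+1}$ of each ascending chain $A_{i}$, and make $c$ constant (drop $0$) along each descending chain $D_{j}$, so that the height accumulated by one ascending chain is transferred intact to the next through the ``pin'' at the shared valley. Clauses (1) and (2) of condition (*) are exactly what guarantee that these prescriptions never clash and extend to a globally order-reversing $c$ on all of $P$. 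Since then every up-edge of $C$ has drop $q-1 < q$, any candidate ideal $J$ is forced to contain both ends of each up-edge, hence to swallow the maximal element (where $c = q-1 < q$), so no $J$ witnesses membership and the monomial escapes. Counting along the path gives $c(\hat{0}) = (\len^{*} C + 2)(q-1)$ — the two extra units coming from the base level at $t(A_{n})$ and from the cover $\hat{0} \lessdot q_{1}$ — whence $\nu_{\mf{m}}^{\mf{m}}(q) \ge (\rank^{*} P + 2)(q-1)$ and $c^{\mf{m}}(\mf{m}) \ge \rank^{*} P + 2$.

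For the upper bound I would start from an arbitrary escaping $c$ with $c(\hat{0}) = r \ge q$ and reconstruct a path by descent: because $J = \{\hat{0}\}$ fails, some minimal element $q_{1}$ has $c(\hat{0}) - c(q_{1}) \le q-1$, and iterating the failure of the ideal condition produces covers along which $c$ drops by at most $q-1$, organised into ascending chains separated by descending chains exactly where the escape condition forces a ``reset''. The escaping hypothesis forces these ascending chains to assemble into a single path satisfying condition (*), and the pin-transfer identity run backwards bounds $r$ by $(\len^{*} C + 2)(q-1) \le (\rank^{*} P + 2)(q-1)$. Dividing by $q$ and letting $e \to \infty$ gives $c^{\mf{m}}(\mf{m}) \le \rank^{*} P + 2$, and together with the lower bound this proves the theorem.

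The main obstacle is the upper bound, specifically proving that condition (*) is exactly the right constraint. One must show that the path extracted from an escaping monomial genuinely satisfies both clauses of (*) — so that its upper length $\len^{*}$, and not its full length, governs the size of $r$ — and, conversely, that no cleverer assignment exploiting the two-dimensional structure of $P$ (rather than a single zigzag path) can beat the staircase. Pinning down that the descending chains contribute nothing to the count while the ascending chains must be ``disjoint after passing to generated ideals'', which is precisely the content of clauses (1) and (2), is the delicate combinatorial heart of the argument; controlling the additive constants so that they vanish after dividing by $q$ is a comparatively routine but necessary final check.
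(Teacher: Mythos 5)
Your setup (order-reversing exponent functions on $\hat{P}$, the multigraded criterion for membership in $\mf{m}^{[q]}$) and your lower bound are essentially the paper's argument. The staircase you describe is exactly the paper's monomial $M=\prod_{I=\emptyset,I_1,\dots,I_m}\varphi(I)^{Q-1}$, where $I_i$ is the poset ideal generated by the first $i$ elements of the concatenated ascending chains; writing it as such a product makes membership in $R$ automatic, and condition (*) is then used only to check that the exponents along the path take the intended values $m(Q-1),(m-1)(Q-1),\dots,Q-1$. Your forcing argument (every candidate ideal must swallow the top peak, where the exponent is $q-1<q$) is the paper's Claim 2.6 verbatim. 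If you keep your formulation, you do still owe the reader a definition of $c$ off the path and a check that the prescribed values are consistent with order-reversal for comparable pairs that are far apart on the path; this is exactly what clauses (1) and (2) of (*) buy you, but it is asserted rather than proved in your write-up.

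The genuine gap is the upper bound, and you have correctly located it yourself: your plan to extract a single zigzag path from an escaping monomial by ``iterating the failure of the ideal condition'' does not work as stated, because the hypothesis is that \emph{every} ideal fails to be a witness, and each failure only tells you that \emph{some} small-drop cover leaves that ideal --- it does not tell you which one, nor that the chosen covers concatenate into one path satisfying (*). The paper resolves this by going in the opposite direction: it defines $d_M(p)=1$ when $p$ is reachable from a minimal element with $r-s(p_{\min})\le Q-1$ by a (*)-path all of whose ascending steps drop by at most $Q-1$, takes $I$ to be the downward closure of $\{p\mid d_M(p)=1\}$, and then \emph{verifies} that this single $I$ is a witness whenever $r\ge(\rank^*P+2)(Q-1)+1$. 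The real work is in that verification: Fact 2.8(1) (reachable elements have $s(p)\ge Q$, which is where the bound on $r$ and the inequality $\len^*C\le\rank^*P$ for every (*)-path enter), Fact 2.8(2) (the drop across the boundary of the $d_M=1$ set is at least $Q$), and the case analysis for covers $p_j\lessdot p_i$ with $p_j\in I$ but $d_M(p_j)=0$, where $p_j$ sits below both a reachable $p_k$ and the outside element $p_i$ and one must rule out, using (*) and Remark 1.8, that $p_i$ itself becomes reachable. None of this is present in your proposal; until you either carry out this construction or show that your path extraction can be made deterministic and (*)-compatible, the inequality $c^{\mf{m}}(\mf{m})\le\rank^*P+2$ is not established.
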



\begin{lem}
$c_{-}^{\mf{m}}(\mf{m}) \ge \rank^{*} P + 2$.
\end{lem}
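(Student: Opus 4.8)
The plan is to bound $\nu_{\mf{m}}^{\mf{m}}(q)$ from below for every $q = p^{e}$ by exhibiting a single explicit monomial of $R$ of degree $(\rank^{*} P + 2)(q-1)$ that escapes $\mf{m}^{[q]}$, and then pass to the $\liminf$. Since $R$ is a normal affine-monoid (toric ASL) ring, $\mf{m} = R_{+}$ is standard graded and $\mf{m}^{[q]}$ is a monomial ideal, so $\nu_{\mf{m}}^{\mf{m}}(q)$ is just the largest degree of a monomial not lying in $\mf{m}^{[q]}$. First I would record the combinatorial dictionary: a monomial of $R$ is the same datum as an \emph{order-reversing} function $c \colon P \to \bb{Z}_{\ge 0}$ together with its $T$-degree $a \ge \max_{i} c_{i}$ (here $p_{i} \le p_{j} \Rightarrow c_{i} \ge c_{j}$, reflecting that poset ideals are down-sets). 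Writing $\chi_{I}$ for the indicator of a poset ideal $I$, one checks that $M \in \mf{m}^{[q]}$ iff some $I \in J(P)$ realizes a ``clean cut'': $c \ge q$ on $I$, $c \le a-q$ off $I$, and every covering pair $p \lessdot p'$ with $p \in I$, $p' \notin I$ satisfies $c(p) - c(p') \ge q$. Thus $M \notin \mf{m}^{[q]}$ exactly when no poset ideal admits such a cut.

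With this in hand, I would choose a maximal path $C = (q_{1},\dots,q_{t})$ satisfying condition (*) with $\len^{*} C = \rank^{*} P$, let $u_{i}$ denote the number of steps $q_{j} \lessdot q_{j+1}$ of $C$ from position $i$ to its end, and set $c(q_{i}) := (u_{i}+1)(q-1)$ on the vertices of $C$, together with $a := (\rank^{*} P + 2)(q-1)$. By design the $c$-drop across each up-step is exactly $q-1$, the start $q_{1}$ (a minimal element) has $c(q_{1}) = (\rank^{*} P +1)(q-1) = a-(q-1)$, and the end $q_{t}$ (a maximal element) has $c(q_{t}) = q-1$, so there is a gap of $q-1$ at the top and at the bottom. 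I would then extend $c$ off the path by the maximal order-reversing extension $c(p) = \max(\{0\}\cup\{c(q_{i}) : q_{i} \in C,\ q_{i} \ge p\})$, which keeps $0 \le c \le a$ and restricts to the prescribed values on $C$.

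The one genuinely delicate point, and the place where condition (*) is indispensable, is showing that the prescribed values already form an order-reversing partial function on the vertices of $C$; the extension step is then automatic. Here I would argue as follows. If $q_{i} \le q_{j}$ in $P$ with $q_{i}$ later than $q_{j}$ along $C$, then by the restatement of (*) given in the remark following its definition there is no locally maximal or locally minimal vertex strictly between them, so that stretch of $C$ is monotone, hence descending, hence contains no up-step; therefore $u_{i} = u_{j}$ and $c(q_{i}) = c(q_{j})$, which is compatible with order-reversibility. When $q_{i}$ is earlier than $q_{j}$ the inequality $u_{i} \ge u_{j}$ is automatic, since $u$ is non-increasing along $C$. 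This is precisely what makes the path-based definition consistent with the order on $P$.

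Finally I would verify that no poset ideal $I$ admits a clean cut, by a three-case split that uses only the values on $C$. If $q_{t} \in I$, then $c(q_{t}) = q-1 < q$ defeats the first condition; if $q_{1} \notin I$, then $c(q_{1}) = a-(q-1) > a-q$ defeats the second; and otherwise $q_{1} \in I$, $q_{t} \notin I$, so the path must leave $I$, and since a poset ideal cannot be exited along a down-step, it is exited along an up-step $q_{i} \lessdot q_{i+1}$ of $C$ with drop $c(q_{i})-c(q_{i+1}) = q-1 < q$, defeating the third. Hence $M \notin \mf{m}^{[q]}$, giving $\nu_{\mf{m}}^{\mf{m}}(q) \ge (\rank^{*} P + 2)(q-1)$ and, on dividing by $q$ and taking $\liminf$, the claimed bound $c_{-}^{\mf{m}}(\mf{m}) \ge \rank^{*} P + 2$. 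The main obstacle is the order-reversing verification of the third paragraph: all the weight of hypothesis (*) is spent there, whereas the no-cut argument is then a clean structural consequence of $C$ being a maximal path from a minimal to a maximal element.
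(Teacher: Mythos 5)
Your proof is correct and follows essentially the same route as the paper: the exponent function $c(q_{i})=(u_{i}+1)(q-1)$ extended order-reversingly off the path produces exactly the monomial $\prod_{j}\varphi(I_{j})^{q-1}$ that the paper builds from the chain of poset ideals $I_{j}$ generated by initial segments of the path, and your three-case ``no clean cut'' argument (cannot exit a down-set along a down-step; every up-step drops only $q-1$) is the same contradiction the paper derives by showing $q_{k(1)},\dots,q_{k(m)}\in I$ successively. Your explicit verification that condition (*) forces the prescribed values to be order-reversing on $V(C)$ is in fact spelled out more carefully than in the paper, which invokes (*) at that point without detail.
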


\begin{proof}
First of all, we note that for all $Q = p^{e}$, 
\[
\mf{m}^{[Q]} = (\varphi(I)^{Q} \mid I \in J(P))R
\]
and
\[
R = \bigoplus^{+\infty}_{r = 0}\left( T^{r}X_{1}^{s(p_{1})} \cdots X_{N}^{s(p_{N})} \mid 0 \le s(p_{i}) \le r, p_{i} \le p_{j} \Rightarrow s(p_{i}) \ge s(p_{j}) \right). 
\]

Take a path $C$ such that $\len^{*} C = \rank^{*} P$ and decompose $C = A_{1} + D_{1} + \cdots + D_{n - 1} + A_{n}$, where
\begin{align*}
	V(A_{1}) &= \{q_{1}, \ldots, q_{a(1)}\}, \\	
	V(D_{1}) &= \{q^{\prime}_{1}, \ldots, q^{\prime}_{d(1)}\}, \\
	V(A_{2}) &= \{q_{a(1) + 1}, \ldots, q_{a(2)}\}, 
\end{align*}

\vspace{2mm}

\begin{center}	
$\rotatebox{90}{$\cdots$}$
\end{center}
\begin{align*}
	\hspace{8mm} V(D_{n - 1}) &= \{q^{\prime}_{d(n - 2) + 1}, \ldots, q^{\prime}_{d(n - 1)}\}, \\
	V(A_{n}) &= \{q_{a(n - 1) + 1}, \ldots, q_{a(n)} = q_{m}\}.
\end{align*}
Then we note that $m = \rank^{*} P + 1$. 

Next, we define an increasing sequence of poset ideals as follows:
\[
I_{1} = \{q_{k(1)}\},
\]
\[
I_{i} = <\{q_{k(i)}\}> \cup I_{i - 1} \ \ (2 \le i \le m). 
\]

To prove Lemma 2.5, it is enough to show that

\begin{cla}
\[
M = \prod_{I = \emptyset, I_{1}, \ldots, I_{m}} \varphi(I)^{Q - 1} \in \mf{m}^{(m + 1)(q - 1)} \setminus \mf{m}^{[Q]}. 
\]
\end{cla}

\noindent{\em Proof of Claim 2.6.}
Put $M = T^{r} X^{s(p_{1})}_{1} \cdots X^{s(p_{n})}_{n}$. 
Then, by the construction of $M$, we have $r = (m + 1)(Q - 1)$. 
Hence $M \in \mf{m}^{(m + 1)(Q - 1)}$. 
Moreover, since $C$ satisfies a condition (*), $s(q_{k(1)}) = m(Q - 1)$, $s(q_{k(2)}) = (m - 1)(Q - 1)$, \ldots, $s(q_{k(m)}) = Q - 1$. 

We assume that $M \in \mf{m}^{[Q]}$. 
Then there exists $I \in J(P)$ such that $M/\varphi(I)^{Q} \in R$. 
Put $M/\varphi(I)^{Q} = T^{r^{\prime}} X^{s^{\prime}(p_{1})}_{1} \cdots X^{s^{\prime}(p_{n})}_{n}$. 
Then $0 \le s^{\prime}(p_{i}) \le r^{\prime}$ and $s^{\prime}(p_{i}) \le s^{\prime}(p_{j})$ if $p_{i} \ge p_{j}$. 

By the construction of $M$, we have $r^{\prime} = m(Q - 1) - 1$. 
Moreover, $s^{\prime}(p_{i}) = s(p_{i}) - Q$ if $p_{i} \in I$ and $s^{\prime}(p_{i}) = s(p_{i})$ if $p_{i} \not\in I$. 
Hence, if $q_{k(1)} \not\in I$, then $s^{\prime}(q_{k(1)}) = s(q_{k(1)}) = m(Q - 1) < r$, a contradiction. 
Therefore $q_{k(1)} \in I$. 
Also if $q_{k(2)} \not\in I$, then $s^{\prime}(q_{k(2)}) = s(q_{k(2)}) = (m - 1)(Q - 1) > (m - 1)(Q - 1) - 1 = s^{\prime}(q_{k(1)})$. 
This contradicts $q_{k(2)} \ge q_{k(1)}$. 
Hence $q_{k(2)} \in I$. 
In the same way, $q_{k(3)}, \ldots, q_{k(m)} \in I$. 
This contradicts $s(q_{k(m)}) = Q - 1 < Q$. 
Therefore $M \not\in \mf{m}^{[Q]}$. 
\end{proof}

Hence we have $c^{\mf{m}}(\mf{m}) \ge \rank^{*} P + 2$. 
Next, we prove the opposite inequality. 

\begin{lem}
For all large $Q = p^{e} \gg 0$, if $r \ge (\rank^{*} P + 2)(Q - 1) + 1$ then $\mf{m}^{r} \subseteq \mf{m}^{[Q]}$.
\end{lem}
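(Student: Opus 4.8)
The plan is to reduce the statement to a purely combinatorial fact about exponent vectors, and then, from any monomial that fails to lie in $\mf{m}^{[Q]}$, to manufacture a maximal path satisfying $(*)$ whose upper length forces $d$ to be small. Since $\mf{m}^{[Q]}=(\varphi(I)^{Q}\mid I\in J(P))R$ is a monomial ideal of the normal toric ring $R$ and $\mf{m}^{r}=\bigoplus_{d\ge r}R_{d}$, it suffices to show that every monomial $M=T^{d}X_{1}^{s(p_{1})}\cdots X_{N}^{s(p_{N})}\in R$ with $d\ge(\rank^{*}P+2)(Q-1)+1$ lies in $\mf{m}^{[Q]}$, where $0\le s(p_{i})\le d$ and $s$ is order-reversing. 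Dividing by $\varphi(I)^{Q}=T^{Q}\prod_{p\in I}X_{p}^{Q}$ and using the description of $R$ recalled in Lemma 2.5, one checks that $M\in\mf{m}^{[Q]}$ if and only if some poset ideal $I$ is \emph{subtractable}, meaning $s(p)\ge Q$ for $p\in I$, $s(p)\le d-Q$ for $p\notin I$, and $s(p)-s(p')\ge Q$ for every cover $p\lessdot p'$ with $p\in I$ and $p'\notin I$.

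Next I would reformulate subtractability as a reachability condition in the Hasse diagram. Call a cover $p\lessdot p'$ a \emph{small drop} if $s(p)-s(p')\le Q-1$, and set $U=\{p:s(p)\ge d-Q+1\}$. Any subtractable $I$ must contain $U$, be contained in $\{p:s(p)\ge Q\}$, be a down-set, and be closed upward along small-drop covers (else the boundary condition fails). Hence the smallest candidate is the closure $\overline{U}$ of $U$ under the two moves ``descend along any cover'' and ``ascend along a small-drop cover'', and $M\in\mf{m}^{[Q]}$ \emph{iff} $\overline{U}\subseteq\{p:s(p)\ge Q\}$. Therefore, if $M\notin\mf{m}^{[Q]}$ then $\overline{U}$ contains some $\ell$ with $s(\ell)\le Q-1$, so there is a walk in the Hasse diagram reaching $\ell$ that uses arbitrary downward covers and only small-drop upward covers.

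I would then convert this walk into a maximal $(*)$-path. Among all such walks from a minimal element $q_{1}$ of $P$ with $s(q_{1})\ge d-Q+1$ (one exists below any $h\in U$, and the saturated chain up to it consists of small drops, its total drop being at most $Q-1$) to the first element $\ell$ with $s(\ell)\le Q-1$, I choose one using the fewest upward covers. One enters a low element only by an up-step, since a downward cover into $\ell$ would give $s(\ell)\ge s(\text{predecessor})\ge Q$; thus the walk ends with an up-step, and $\ell$ is a local maximum of it. Minimality of the number of up-steps forces the walk (after deleting repeats) to be a simple path satisfying $(*)$: a violation $q_{s}\le q_{t}$ with $t<r<s$ and $q_{r}$ a local extremum could be removed by rerouting the subpath from $q_{t}$ to $q_{s}$ through the downward chain $q_{t}\gtrdot\cdots\gtrdot q_{s}$, strictly lowering the up-step count. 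Extending upward to a maximal element (by the remark that a $(*)$-path ending at a local maximum extends to a maximal $(*)$-path) yields a maximal path $C$ satisfying $(*)$ whose up-steps are exactly the chosen small-drop covers, so $\len^{*}C\le\rank^{*}P$. Finally, along $C$ each up-step drops $s$ by at most $Q-1$, each down-step gives a nonnegative gain, and the drops telescope to at least $s(q_{1})-s(\ell)\ge(d-Q+1)-(Q-1)=d-2(Q-1)$; hence $(\len^{*}C)(Q-1)\ge d-2(Q-1)$, i.e. $d\le(\len^{*}C+2)(Q-1)\le(\rank^{*}P+2)(Q-1)$, contradicting the hypothesis on $d$.

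I expect the third step to be the main obstacle: arranging that the reachability witness is a genuine simple path satisfying $(*)$, with its endpoints in order (starting at a minimal element, entering the low element by an up-step, and extending to a maximal element) and its up-steps controlled. The membership-as-reachability reformulation and the telescoping inequality are comparatively routine; condition $(*)$ is precisely the device that prevents the downward detours from inflating the up-step count past $\rank^{*}P$, which is why the ``fewest up-steps'' rerouting argument is the crux. One should keep $Q\ge 2$ in mind (this is what ``$Q\gg 0$'' provides), so that $d\ge 2Q-1$ and hence $U\subseteq\{p:s(p)\ge Q\}$, making the reformulation meaningful.
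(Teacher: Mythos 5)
Your proof is correct, and its skeleton coincides with the paper's: both reduce the lemma to producing, for each monomial $M=T^{d}\prod_{p}X_{p}^{s(p)}$ with $d\ge(\rank^{*}P+2)(Q-1)+1$, a poset ideal $I$ with $M/\varphi(I)^{Q}\in R$; both build $I$ out of the elements reachable from the ``high'' part $\{p\mid s(p)\ge d-Q+1\}$ by moves whose upward covers drop $s$ by at most $Q-1$; and both close with the same telescoping estimate combined with $\len^{*}C\le\rank^{*}P$ via Remark 1.8. The difference is where the combinatorial work sits. The paper defines the ideal from endpoints of $(*)$-paths (the function $d_{M}$) and must then verify the boundary condition $s(p)-s(p')\ge Q$ across covers leaving $I$; that is Fact 2.8(2) plus the Case 2-1/2-2 analysis. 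You instead take the closure $\overline{U}$ under ``descend arbitrarily / ascend along small drops,'' which makes the down-set and boundary conditions true by construction, and shift all the work into showing that a reachability witness can be realized by a $(*)$-path of upper length at most $\rank^{*}P$ --- your ``fewest up-steps'' rerouting. That extremal argument is the genuinely new ingredient, and it checks out: a violation of $(*)$ (in the sense of Remark 1.7) at positions $t<r<s$ always encloses an up-step adjacent to the local extremum $q_{r}$; the replacement by a saturated descending chain from $q_{t}$ to $q_{s}$ uses only (always-legal) down-steps and only elements $x$ with $s(x)\ge s(q_{t})\ge Q$; and $q_{s}\ne\ell$ since $q_{s}\le q_{t}$ would force $s(q_{s})\ge Q$ --- so rerouting preserves every constraint while strictly lowering the up-step count. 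Two wording slips, neither a gap: the extension to a maximal $(*)$-path generally adds up-steps, so what you actually use is $\len^{*}C\le\len^{*}\tilde{C}\le\rank^{*}P$ rather than the extended path's up-steps being ``exactly the chosen small-drop covers''; and, like the paper itself (see its Case 2-2), you implicitly treat Remark 1.7 as an equivalent reformulation of condition $(*)$.
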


\begin{proof}
We note that
\[
\mf{m}^{r} = (T^{r}X_{1}^{s(p_{1})} \cdots X_{N}^{s(p_{N})} \mid 0 \le s(p_{i}) \le r, p_{i} \ge p_{j} \Rightarrow s(p_{i}) \le s(p_{j}))R. 
\]
We will show that for each $M = T^{r}X_{1}^{s(p_{1})} \cdots X_{N}^{s(p_{N})} \in \mf{m}^{r}$, there exists $I \in J(P)$ such that $\frac{M}{\varphi(I)^{Q}} \in \mathcal{R}_{k}[D]$.

\noindent Case 1: For all minimal elements $p \in P$, $r - s(p) \ge Q$. 

Put $I = \emptyset$. 
Then $\frac{M}{\varphi(I)^{Q}} \in \mathcal{R}_{k}[D]$. 
Indeed, put $\frac{M}{\varphi(I)^{Q}} = T^{r^{\prime}} X^{s^{\prime}(p_{1})}_{1} \cdots X^{s^{\prime}(p_{n})}_{n}$, then $r^{\prime} = r - Q$ and $s^{\prime}(p_{i}) = s(p_{i})$. 
Hence $0 \le s^{\prime}(p_{i}) \le r^{\prime}$ and $s^{\prime}(p_{i}) \le s^{\prime}(p_{j})$ if $p_{i} \ge p_{j}$.  
Therefore $\frac{M}{\varphi(I)^{Q}} \in \mathcal{R}_{k}[D]$.

\noindent Case 2: There exists a minimal element $p \in P$ such that $r - s(p) \le Q - 1$. 

For each $p \in P$, we define a function $d_{M}:P \to \{0, 1\}$ as follows: 
We define $d_{M}(p) = 1$ if there exists a path $C = p_{\min} \to p$ such that $C$ satisfies the following conditions, and $d_{M}(p) = 0$ otherwise: 

\begin{enumerate}
	\item $r - s(p_{\min}) \le Q - 1$.  
	\item $C$ satisfies a condition (*). 
	\item We decompose $C = A_{1} + D_{1} + A_{2} + \cdots + D_{n^{\prime} - 1} + A_{n^{\prime}}$. 
	Let $q_{1}, \ldots, q_{m^{\prime}}$ be the elements of $V(A_{1}). \ldots, V(A_{n^{\prime}})$ as in Lemma 2.5. 
	Then for all $i = 1, \ldots, m^{\prime}$, $s(q_{k(i)}) - s(q_{k(i + 1)}) \le Q - 1$. 
\end{enumerate}

\begin{fact}
\begin{enumerate}
	\item If $d_{M}(p) = 1$ then $s(p) \ge Q$. 
	\item If $p^{\prime} \gtrdot p$, $d_{M}(p) = 1$ and $d_{M}(p^{\prime}) = 0$, then $s(p) - s(p^{\prime}) \ge Q$. 
\end{enumerate}
\end{fact}

\begin{proof}[Proof of Fact 2.8]
(1) For all $p \in P$ such that $d_{M}(P) = 1$, by definition of $d_{M}$, there exists a path $C = p_{\min} \to p$ such that $C$ satisfies the following conditions:

\begin{enumerate}
	\item $r - s(p_{\min}) \le Q - 1$.  
	\item $C$ satisfies a condition (*). 
	\item We decompose $C = A_{1} + D_{1} + A_{2} + \cdots + D_{n^{\prime} - 1} + A_{n^{\prime}}$. 
	Let $q_{1}, \ldots, q_{m^{\prime}}$ be the elements of $V(A_{1}). \ldots, V(A_{n^{\prime}})$ as in Lemma 2.5. 
	Then for all $i = 1, \ldots, m^{\prime}$, $s(q_{k(i)}) - s(q_{k(i + 1)}) \le Q - 1$. 
\end{enumerate}
Then we note that $\len^{*} C = m^{\prime} + 1 \le m + 1$. 
We put 

\begin{align*}
	V(A_{1}) &= \{q_{1}, \ldots, q_{a(1)}\}, \\	
	V(D_{1}) &= \{q^{\prime}_{1}, \ldots, q^{\prime}_{d(1)}\}, \\
	V(A_{2}) &= \{q_{a(1) + 1}, \ldots, q_{a(2)}\}, 
\end{align*}

\vspace{2mm}

\begin{center}	
$\rotatebox{90}{$\cdots$}$
\end{center}
\begin{align*}
	\hspace{8mm} V(D_{n - 1}) &= \{q^{\prime}_{d(n - 2) + 1}, \ldots, q^{\prime}_{d(n - 1)}\}, \\
	V(A_{n}) &= \{q_{a(n - 1) + 1}, \ldots, q_{a(n)} = q_{m}\}.
\end{align*}

Since $r \ge (m + 1)(Q - 1) + 1$ by assumption, we get
\begin{align*}
	s(q_{m}) &\ge r - m^{\prime}(Q - 1) \\
		     &\ge (m + 1)(Q - 1) + 1 - m^{\prime}(Q - 1) \\
		     &\ge Q.
\end{align*}

(2) Assume that $p^{\prime} \gtrdot p$, $d_{M}(p) = 1$ and $d_{M}(p^{\prime}) = 0$. 
If $s(p) - s(p^{\prime}) \le Q - 1$, then there exists a path $C = p_{\min} \to p$ since $d_{M}(p) = 1$. 
By Remark 1.8, we can extend $C$ to a path $\tilde{C} = p_{\min} \to p^{\prime}$ satisfying a condition (*). 
Hence $d_{M}(p^{\prime}) = 1$, a contradiction. 
Therefore $s(p) - s(p^{\prime}) \ge Q$. 
\end{proof}

We return to the proof of Theorem 2.4. 
Put $I = \{p \in P \mid$ there exists $p^{\prime} \ge p$ such that $d_{M}(p^{\prime}) = 1\} \in J(P)$.  
We prove that $M/\varphi(I)^{Q} \in \mathcal{R}_{k}[D]$. 

Put $M/\varphi(I)^{Q} = T^{r^{\prime}} X^{s^{\prime}(p_{1})}_{1} \cdots X^{s^{\prime}(p_{N})}_{N}$. 
Then $r^{\prime} = r - Q$. 
Moreover, $s^{\prime}(p_{i}) = s(p_{i}) - Q$ if $p_{i} \in I$ and $s^{\prime}(p_{i}) = s(p_{i})$ if $p_{i} \not\in I$. 

Firstly, we prove that $s^{\prime}(p_{i}) \le s^{\prime}(p_{j})$ if $p_{i} \ge p_{j}$. 
We may assume that $p_{i} \gtrdot p_{j}$. 
We note that $s^{\prime}(p_{i}) = s(p_{i}) - Q$ if $p_{i} \in I$ and $s^{\prime}(p_{i}) = s(p_{i})$ if $p_{i} \not\in I$. 
Hence, if $p_{i}, p_{j} \in I$, or $p_{i}, p_{j} \not\in I$, then $M/\varphi(I)^{Q} \in \mathcal{R}_{k}[D]$. 
Therefore, we may assume that $p_{i} \not\in I$ and $p_{j} \in I$. 
Then we have $d_{M}(p_{i}) = 0$. 
If $d_{M}(p_{j}) = 1$, then $s^{\prime}(p_{i}) =s(p_{i}) \le s(p_{j}) - Q = s^{\prime}(p_{j})$ by Fact 2.8(2). 
If $d_{M}(p_{j}) = 0$, then there exists $p_{k} \in P$ such that $d_{M}(p_{k}) = 1$ and $p_{k} \ge p_{j}$. 
If $p_{k} \ge p_{i}$, then $p_{i} \in I$, a contradiction. 
Hence $p_{k} \not\ge p_{i}$. 
Since $d_{M}(p_{k}) = 1$, there exists a path $C = p_{\min} \to p_{k}$. 

\noindent Case 2-1: We can extend $C$ to a path $\tilde{C} = p_{\min} \to p_{i}$ satisfying a condition (*). 

If $s(p_{k}) - s(p_{i}) \le Q - 1$, then $d_{M}(p_{i}) = 1$, a contradiction. 
Hence $s(p_{k}) - s(p_{i}) \ge Q$. 
Therefore, we have $s^{\prime}(p_{j}) - s^{\prime}(p_{i}) = s(p_{j}) - Q - s(p_{i}) \ge s(p_{k}) - s(p_{i}) - Q \ge 0$. 

\noindent Case 2-2: We cannot extend $C$ as Case 2-1. 

In this case, a path $\tilde{C} = p_{\min} \to p_{i}$ does not satisfy a condition (*). 
Hence there exists $p_{\ell} \in V(C)$ such that $p_{\ell} \ge p_{k}, p_{i}$. 
This contradicts $d_{M}(p_{j}) = 0$. 
Therefore, we have that $s^{\prime}(p_{i}) \le s^{\prime}(p_{j})$ if $p_{i} \ge p_{j}$. 

Secondly, we prove that $0 \le s^{\prime}(p_{i}) \le r^{\prime}$. 
By Fact 2.8(1), $0 \le s^{\prime}(p_{i})$.  
To prove $s^{\prime}(p_{i}) \le r^{\prime}$, it is enough to show that $s^{\prime}(p_{\min}) \le r^{\prime}$ for all minimal element $p_{\min}$. 
If $p_{\min} \in I$, $s^{\prime}(p_{\min}) = s(p_{\min}) - Q \le r - Q = r^{\prime}$. 
Assume that $p_{\min} \not\in I$. 
If $r - s(p_{\min}) \le Q - 1$, then $d_{M}(p) = 1$, a contradiction. 
Hence $r - s(p_{\min}) \ge Q$, and thus $r^{\prime} - s^{\prime}(p_{\min}) = r - Q - s(p_{\min}) \ge 0$.   
\end{proof}

\begin{cor}
$c_{+}^{\mf{m}}(\mf{m}) \le \rank^{*} P + 2$.
\end{cor}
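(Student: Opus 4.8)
The plan is to read off the desired bound directly from Lemma 2.7, which has just been established. Lemma 2.7 asserts that for all sufficiently large $Q = p^{e}$, the containment $\mf{m}^{r} \subseteq \mf{m}^{[Q]}$ holds whenever $r \ge (\rank^{*} P + 2)(Q - 1) + 1$. The contrapositive of this statement controls the quantity $\nu_{\mf{m}}^{\mf{m}}(Q) = \max\{r \in \bb{N} \mid \mf{m}^{r} \not\subseteq \mf{m}^{[Q]}\}$ appearing in the definition of $c_{+}^{\mf{m}}(\mf{m})$.

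First I would observe that since $\mf{m}^{r} \subseteq \mf{m}^{[Q]}$ for every $r \ge (\rank^{*} P + 2)(Q - 1) + 1$, no such $r$ can satisfy $\mf{m}^{r} \not\subseteq \mf{m}^{[Q]}$. Hence the maximal $r$ for which the non-containment holds, namely $\nu_{\mf{m}}^{\mf{m}}(Q)$, must obey
\[
\nu_{\mf{m}}^{\mf{m}}(Q) \le (\rank^{*} P + 2)(Q - 1)
\]
for all large $Q = p^{e}$.

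Next I would divide through by $Q$ and pass to the limit. Dividing gives
\[
\frac{\nu_{\mf{m}}^{\mf{m}}(Q)}{Q} \le (\rank^{*} P + 2) \cdot \frac{Q - 1}{Q}.
\]
Taking the $\limsup$ as $e \to \infty$ and using that $(Q - 1)/Q = 1 - 1/p^{e} \to 1$, the right-hand side tends to $\rank^{*} P + 2$. This yields $c_{+}^{\mf{m}}(\mf{m}) = \limsup_{e \to \infty} \nu_{\mf{m}}^{\mf{m}}(p^{e})/p^{e} \le \rank^{*} P + 2$, as desired.

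There is no serious obstacle here, since essentially all of the content has been front-loaded into the proof of Lemma 2.7. The only point requiring any care is the bookkeeping of the $+1$ and $-1$ offsets: one must check that the contrapositive produces precisely the bound $\nu_{\mf{m}}^{\mf{m}}(Q) \le (\rank^{*} P + 2)(Q - 1)$, and that replacing $Q - 1$ by $Q$ in the denominator is harmless in the limit because $(Q - 1)/Q \to 1$. Combined with Lemma 2.5, which supplies the reverse inequality $c_{-}^{\mf{m}}(\mf{m}) \ge \rank^{*} P + 2$, this corollary closes the sandwich $\rank^{*} P + 2 \le c_{-}^{\mf{m}}(\mf{m}) \le c_{+}^{\mf{m}}(\mf{m}) \le \rank^{*} P + 2$, forcing the limit defining $c^{\mf{m}}(\mf{m})$ to exist and equal $\rank^{*} P + 2$, which is exactly Theorem 2.4.
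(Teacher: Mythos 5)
Your proposal is correct and is exactly the deduction the paper intends: the corollary is stated immediately after Lemma 2.7 with no written proof, precisely because the contrapositive gives $\nu_{\mf{m}}^{\mf{m}}(Q) \le (\rank^{*} P + 2)(Q-1)$ and dividing by $Q$ and taking $\limsup$ yields the bound. Your bookkeeping of the $\pm 1$ offsets and the concluding sandwich with Lemma 2.5 match the paper's argument for Theorem 2.4.
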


As a result, we have $c^{\mf{m}}(\mf{m}) = \rank^{*} P + 2$.


\section{$F$-pure~thresholds of Hibi rings}

The $F$-pure~threshold, 
which was introduced by \cite{TW}, 
is an invariant of an ideal of an $F$-finite $F$-pure ring. 
$F$-pure~threshold can be calculated by computing generalized test ideals (see \cite{HY}),
and \cite{Bl} showed how to compute generalized test ideals in the case of toric rings and its monomial ideals.
Since Hibi rings are toric rings, 
we can compute $F$-pure~thresholds of the homogeneous maximal ideal of arbitrary Hibi rings,
and will be described in terms of poset.

\begin{defn}[$F$-pure~threshold\cite{TW}]
	Let $R$ be an $F$-finite $F$-pure ring of characteristic $p > 0$, 
	$\mf{a}$ a nonzero ideal of $R$, 
	and $t$ a non-negative real number.
	The pair $(R, \mf{a}^t)$ is said to be $F$-pure if for all large $q = p^e$, 
	there exists an element $d \in \mf{a}^{\ceil{t(q-1)}}$ such that 
	the map $R \longrightarrow R^{1/q} \ (1 \mapsto d^{1/q})$ splits as an $R$-linear map. 
	Then the $F$-pure~threshold $\fpt(\mf{a})$ is defined as follows: 
	\begin{align*}
		\fpt(\mf{a}) = \sup\{t \in \mathbb{R}_{\geq 0} \mid (R, \mf{a}^t)  \text{ is } F \text{-pure}\}. 
	\end{align*}
\end{defn}

Hara and Yoshida \cite{HY} introduced the generalized test ideal $\tau(\mf{a}^t)$ ($t$ is a non negative real number).
Then $\fpt(\mf{a})$ can be calculated as the minimum jumping number of $\tau(\mf{a}^c)$, that is, 

\begin{align*}
	\fpt(\mf{a}) = \sup \{ t \in R_{\geq 0} \mid \tau(\mf{a}^t) = R \}.
\end{align*}

Especially, \cite{Bl} showed how to calculate $\tau(\mf{a}^c)$ in the case of monomial ideals $\mathfrak{a}$ in a toric ring $R$.  
Now, we recall the following theorem of \cite{Bl}. 

\subsection{setting for toric rings}
Let $k$ be a perfect field, 
$N = M^{\vee} \cong \mathbb{Z}^n$ a dual pair of lattices. 
Let $\sigma \subset N_{\mathbb{R}} = N \otimes_{\mathbb{Z}} \mathbb{R}$ be a strongly convex rational polyhedral cone 
given by $\sigma = \{r_1u_1 + \cdots + r_su_s \mid r_i \in \mathbb{R}_{\geq 0} \}$ for some $u_1, \dots, u_s$ in $N$. 
The dual cone $\sigma^{\vee}$ is a (rational convex polyhedral) cone in $M_{\mathbb{R}}$ defined by
$\sigma^{\vee} = \{m \in M_{\mathbb{R}} \mid (m,v) \geq 0, \forall v \in \sigma\}$. 
The lattice points in $\sigma^{\vee}$ give a sub-semigroup ring of Laurent polynomial ring $k[X_1^{\pm 1}, \dots, X_n^{\pm 1}]$,
generated by $X^m = X_1^{m_1} \cdot \cdots \cdot X_n^{m_n} \ (m \in \sigma^{\vee} \cap M)$.
This affine semigroup ring is denoted by

\begin{align*}
	R_{\sigma} = k[\sigma^{\vee} \cap M].
\end{align*}

$R_{\sigma}$ is said to be the toric ring defined by ${\sigma}$.
For a monomial ideal $\mf{a} = (X^{\alpha_1}, \dots, X^{\alpha_s}) \subset R_{\sigma}$, 
$P(\mf{a})$ denotes the convex hull of $\alpha_1, \dots, \alpha_s$ in $M_{\mathbb{R}}$.

\begin{thm}[\cite{Bl}]
	Let $R_{\sigma}$ be a toric ring defined by ${\sigma}$ over a field of positive characteristic and $\mf{a}$ a monomial ideal of $R_{\sigma}$. 
	Let $v_1, \dots, v_s \in \mathbb{Z}^n$ be the primitive generator of $\sigma$.
	Then a monomial $X^m \in R_{\sigma}$ is in $\tau(\mf{a}^c)$ if and only if 
	there exists $w \in M_{\mathbb{R}}$ with $(w,v_i) \leq 1$ for all $i$, such that
	\begin{align*}
		m + w \in \text{relint } cP(\mf{a}),
	\end{align*}
	where $cP(\mf{a}) = \{cm \in M \mid m \in P(\mf{a})\}$.
\end{thm}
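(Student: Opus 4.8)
The plan is to exploit the fact that every object in sight is $M$-graded. Since $\mf{a}$ is a monomial ideal and $R_{\sigma}$ carries a torus action, the generalized test ideal $\tau(\mf{a}^{c})$ is itself a monomial ideal, so it suffices to decide, for each lattice point $m \in \sigma^{\vee} \cap M$, whether $X^{m} \in \tau(\mf{a}^{c})$. First I would invoke the Frobenius (Cartier-operator) description of the test ideal behind \cite{HY}: writing $q = p^{e}$, a monomial $X^{m}$ lies in $\tau(\mf{a}^{c})$ precisely when, for some large $e$, there is an $R_{\sigma}$-linear map $\phi \colon F^{e}_{*} R_{\sigma} \to R_{\sigma}$ and an element $X^{a} \in \mf{a}^{\ceil{c(q-1)}}$ with $\phi(F^{e}_{*} X^{a}) = X^{m}$ up to a unit. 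The whole argument then reduces to understanding these maps $\phi$ combinatorially.

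The key combinatorial input is the structure of $F^{e}_{*} R_{\sigma}$ as a graded $R_{\sigma}$-module together with the $R_{\sigma}$-linear maps $F^{e}_{*} R_{\sigma} \to R_{\sigma}$. Because $R_{\sigma} = k[\sigma^{\vee} \cap M]$, the $q$-th root decomposes over the cosets of $M/qM$, and each $R_{\sigma}$-linear projection is essentially ``division of exponents by $q$'' twisted by a lattice vector $w'$. The requirement that such a projection genuinely split (equivalently, that it be a well-defined Cartier map for the normal cone $\sigma$) forces the inequalities $(w', v_{i}) \le q - 1$ against each primitive ray generator $v_{i}$; this is exactly where the $v_{1}, \dots, v_{s}$ and the eventual bound $1$ enter, reflecting the $F$-splitting of the toric ring whose splitting data is governed by the sum of the rays. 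Unwinding $\phi(F^{e}_{*} X^{a}) = X^{m}$ then gives, for that stage $q$, a finite lattice condition: there exists $w' \in M$ with $(w', v_{i}) \le q - 1$ for all $i$ and $qm + w' = a$, where $X^{a} \in \mf{a}^{\ceil{c(q-1)}}$, i.e. $a$ lies in the Newton polyhedron $\ceil{c(q-1)} P(\mf{a})$ and meets its interior.

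Finally I would pass to the limit $q \to \infty$. Setting $w = w'/q$ and dividing the stage-$q$ condition by $q$, the constraint $(w', v_{i}) \le q - 1$ rescales to $(w, v_{i}) \le 1$, while $m + w = a/q$ is forced into $\tfrac{\ceil{c(q-1)}}{q} P(\mf{a})$, which converges to $cP(\mf{a})$. The strict interior met at each finite stage, combined with the union over all $e$ in the test-ideal formula, upgrades the limiting membership to the relative interior, yielding $m + w \in \operatorname{relint}(cP(\mf{a}))$. For the converse, any real $w \in M_{\mathbb{R}}$ with $(w, v_{i}) \le 1$ and $m + w \in \operatorname{relint}(cP(\mf{a}))$ can be approximated by lattice vectors $w'$ satisfying the stage-$q$ inequalities for all large $q$, producing the required splittings and hence $X^{m} \in \tau(\mf{a}^{c})$.

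I expect the main obstacle to be twofold. First, since $R_{\sigma}$ need not be Gorenstein, the maps $F^{e}_{*} R_{\sigma} \to R_{\sigma}$ are not controlled by a single splitting of the ring but must be analyzed through the canonical module (or Blickle's multiplier module), so extracting the clean, ray-by-ray bound $(w, v_{i}) \le 1$ requires genuine care rather than a formal computation. Second, and most delicately, one must track the ceilings $\ceil{c(q-1)}$ through the limit so as to land on exactly $\operatorname{relint}(cP(\mf{a}))$ and not an off-by-one translate; this Howald-type interior phenomenon, where the limit of strict finite-level membership is precisely the relative interior, is the technical heart of the statement.
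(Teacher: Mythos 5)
First, a point of reference: the paper offers no proof of this statement at all --- it is quoted verbatim from Blickle \cite{Bl} and used as a black box --- so there is no internal argument to measure yours against. Judged on its own terms, your sketch follows the now-standard Cartier-operator route (closer in spirit to the Hara--Yoshida description of $\tau$ than to Blickle's original argument, which proceeds via the multiplier module and a toric log-resolution), and the skeleton is the right one: reduce to monomials by torus-equivariance, describe $\Hom_{R_\sigma}(F^e_*R_\sigma,R_\sigma)$ lattice-theoretically, and rescale by $q$.

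However, the two points you defer as ``obstacles'' are not side issues; they are the entire content of the theorem, and the proposal closes neither. (i) The claim that every $R_\sigma$-linear map $F^e_*R_\sigma\to R_\sigma$ is a twisted projection indexed by a lattice vector $w'$ with $(w',v_i)\le q-1$ is exactly the identification $\Hom_{R_\sigma}(F^e_*R_\sigma,R_\sigma)\cong F^e_*R_\sigma((1-q)K)$ with $K=-\sum D_i$; you assert the inequalities are ``forced'' but give no argument, and in the non-Gorenstein case this reflexivity/divisor-class computation is the step that produces the ray-by-ray bound $(w,v_i)\le 1$ in the first place. (ii) The limit step is not a routine rescaling. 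At a finite stage a monomial of $\mf{a}^{N}$ has exponent in $NP(\mf{a})+(\sigma^\vee\cap M)$, so the extra $\sigma^\vee$ summand must be absorbed into $w$ (harmless, since $(u,v_i)\ge 0$ for $u\in\sigma^\vee$, but never addressed), and, more seriously, nothing in your argument produces the \emph{relative interior}: membership in a scaled Newton polyhedron is a closed condition, and the passage to $\mathrm{relint}\,cP(\mf{a})$ is precisely where the strict inequalities defining the canonical module (the interior of $\sigma^\vee$) must enter. Writing that the interior ``is met at each finite stage'' asserts the conclusion rather than proving it. There is also a persistent off-by-one --- the test ideal is computed with exponents $\ceil{cp^e}$, not the $\ceil{c(q-1)}$ of the $F$-purity definition --- which is ultimately harmless only once (ii) is genuinely established. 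So: correct roadmap, but the proof is not yet there.
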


Especially, $\tau(\mf{a}^c) = R$ if and only if $X^0 \in \tau(\mf{a}^c)$.
Let $$\mathcal{O} = \{ w \in M_{\mathbb{R}} \mid (w,v_i) \geq 1 (\exists i)\}.$$
Then we get following corollary.

\begin{cor}[\cite{Hir}]
\label{hi}
	$\fpt(\mf{a}) = \sup \{ c \in \mathbb{R}_{\geq 0} \mid 
	(\check{\sigma} \setminus \mathcal{O}) \cap cP(\mf{a}) \neq \emptyset \}$. 
\end{cor}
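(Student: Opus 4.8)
The plan is to unwind the definition of $\fpt(\mf{a})$ through the two facts recalled just above and reduce the statement to a one-variable computation based on the homogeneity of a support function. First I would use $\fpt(\mf{a}) = \sup\{c \in \bb{R}_{\ge 0} \mid \tau(\mf{a}^{c}) = R\}$ together with the equivalence $\tau(\mf{a}^{c}) = R \iff X^{0} \in \tau(\mf{a}^{c})$. Applying Theorem 3.4 with $m = 0$ then characterizes membership: $X^{0} \in \tau(\mf{a}^{c})$ if and only if there is a $w \in \operatorname{relint} cP(\mf{a})$ with $(w, v_{i}) \le 1$ for all $i$. Because the generators of $\mf{a}$ are monomials of $R_{\sigma}$, their exponents lie in $\sigma^{\vee} \cap M$, and convexity of the cone $\sigma^{\vee}$ gives $cP(\mf{a}) \subseteq \sigma^{\vee}$ for every $c \ge 0$. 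Consequently the witness $w$ automatically lies in $\sigma^{\vee}$, and the condition $(\sigma^{\vee} \setminus \mathcal{O}) \cap cP(\mf{a}) \neq \emptyset$ appearing in the statement is equivalent to the existence of $w \in cP(\mf{a})$ with $(w, v_{i}) < 1$ for all $i$.

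It therefore remains to show that
\[
\sup\{c \mid \exists\, w \in \operatorname{relint} cP(\mf{a}),\ (w,v_{i}) \le 1\ \forall i\}
= \sup\{c \mid \exists\, w \in cP(\mf{a}),\ (w,v_{i}) < 1\ \forall i\},
\]
even though the first is a closed condition on the relative interior while the second is an open condition on the full (compact) polytope. For this I would introduce $g(w) = \max_{i}(w, v_{i})$, a continuous convex function that is positively homogeneous of degree one, so that $w \notin \mathcal{O}$ means exactly $g(w) < 1$ and Blickle's constraint means $g(w) \le 1$. Put $g_{0} = \min_{w \in P(\mf{a})} g(w)$, which is attained and nonnegative since $P(\mf{a})$ is compact and contained in $\sigma^{\vee}$. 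Homogeneity yields $\min_{u \in cP(\mf{a})} g(u) = c\, g_{0}$. If $c g_{0} < 1$, then picking a minimizer $u^{*} \in cP(\mf{a})$ and sliding from any relative-interior point toward $u^{*}$ produces, by convexity and continuity of $g$, a relative-interior point with $g < 1$; this witnesses both conditions. If $c g_{0} > 1$, then $g > 1$ on all of $cP(\mf{a})$ and neither condition can hold. Hence both suprema equal $1/g_{0}$ (and $+\infty$ when $g_{0} = 0$, where every $c$ satisfies $c g_{0} = 0 < 1$), which is exactly the asserted value of $\fpt(\mf{a})$.

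The only genuinely delicate point---and the step I expect to be the main obstacle---is reconciling the closed inequality on $\operatorname{relint} cP(\mf{a})$ coming from Theorem 3.4 with the open inequality on all of $cP(\mf{a})$ in the statement. The homogeneity of $g$, which collapses the scaling into the single scalar $c g_{0}$, together with the fact that the threshold $1/g_{0}$ is insensitive to what happens on the boundary $\{g = 1\}$, is precisely what forces the two suprema to agree; the degenerate case $g_{0} = 0$ should be recorded separately, but it is handled by the same inequality.
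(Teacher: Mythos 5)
Your proposal is correct and follows the same route the paper takes: the corollary is deduced from $\fpt(\mf{a})=\sup\{c \mid \tau(\mf{a}^{c})=R\}$, the observation that $\tau(\mf{a}^{c})=R$ if and only if $X^{0}\in\tau(\mf{a}^{c})$, and Blickle's criterion applied with $m=0$. The paper states the corollary without further argument, silently passing from the condition ``$w\in \mathrm{relint}\, cP(\mf{a})$ with $(w,v_{i})\le 1$'' to ``$w\in cP(\mf{a})\cap(\check{\sigma}\setminus\mathcal{O})$,'' i.e.\ with strict inequalities and no relative-interior restriction; your homogeneity argument with $g(w)=\max_{i}(w,v_{i})$, showing both suprema equal $1/\min_{P(\mf{a})}g$, is a correct and worthwhile justification of exactly that step.
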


\subsection{$F$-pure threshold of Hibi rings}
Since Hibi rings are toric rings, 
we can compute $F$-pure~threshold of the homogeneous maximal ideal of any Hibi ring using corollary \ref{hi}. 

Recall that Hibi rings have the structure as toric rings. 
Let $P$ be a finite poset, $R = R_k(D), 
\mf{m}$ the unique homogeneous maximal ideal of $R$, where $D=J(P)$. 
$\mathbb{R}^P$ denotes $\#P$-dimensional $\mathbb{R}$-vector space, 
which entries are indexed by $P$. 
$\mathbb{Z}^P$ denotes the lattice points in $\mathbb{R}^P$. 
For a monomial $T^{u_T}\prod_{p \in P}X_p^{u_p} \in R$, 
$u = (u_T, u_p)_{p \in P}$ is a corresponding vector in $\mathbb{Z} \oplus \mathbb{Z}^P$. 
It is known that $R$ is a toric ring defined by a strongly convex rational polyhedral cone generated from "the order polytope of $P$".

\begin{defn}[cf. \cite{St}]
	$P, \mathbb{R}^P$ are as above. 
	An element of P descrived as $(u_p)_{p \in P}$. 
	The order polytope of $P$ is a subset of $\mathbb{R}^P$ satisfying following conditions. 
	\begin{enumerate}
		\item[1)] $0 \leq u_p \leq 1$ for all $p \in P$. 
		\item[2)] $u_p \leq u_{p'}$ if $p \geq p'$. 
	\end{enumerate}
\end{defn}

\begin{rem}
Note that the condition 2) is slightly different from the original. 
It is arranged for construction of Hibi rings in this paper. 
\end{rem}

Let $\mathfrak{m}$ be the maximal homogeneous ideal of $R$. 
>From a constraction of $R$, $P(\mathfrak{m}) - (1, \overrightarrow{0}) \subset (0) \oplus \mathbb{R}^P$ is the order polytope of $P$.

\begin{lem}
	$$ R = k[\mathbb{R}_{\geq 0} P(\mathfrak{m}) \cap (\mathbb{Z} \oplus \mathbb{Z}^P)].$$ 
	Hence, if we put $\sigma^{\vee} = \mathbb{R}_{\geq 0} P(\mathfrak{m})$, 
	then $R$ is the toric ring defined by $\sigma$. 
\end{lem}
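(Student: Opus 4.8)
The plan is to reduce the algebra equality to a single identity between lattice sets: since both sides are monomial $k$-subalgebras of $k[T^{\pm1},X_1^{\pm1},\dots,X_N^{\pm1}]$, they coincide as soon as their exponent monoids agree, i.e. as soon as the set of exponent vectors of the monomials in $R$ equals $\mathbb{R}_{\geq 0}P(\mathfrak{m})\cap(\mathbb{Z}\oplus\mathbb{Z}^P)$. Once the displayed equality is proved, the second assertion is formal: $\mathbb{R}_{\geq 0}P(\mathfrak{m})$ is a rational polyhedral cone, and setting $\sigma^{\vee}=\mathbb{R}_{\geq 0}P(\mathfrak{m})$ presents $R$ as the toric ring $R_{\sigma}$ in the sense of Section 3.1.

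First I would recall the explicit monomial description of $R$ already written down in the proof of Lemma 2.5: encoding the monomial $T^{r}X_1^{s(p_1)}\cdots X_N^{s(p_N)}$ by the vector $u=(r,(s_p)_{p\in P})\in\mathbb{Z}\oplus\mathbb{Z}^P$, the exponent monoid of $R$ is
\[
S_R=\{(r,(s_p))\in\mathbb{Z}_{\geq 0}\oplus\mathbb{Z}^P : 0\leq s_p\leq r,\ s_p\leq s_{p'}\text{ whenever }p\geq p'\}.
\]
Next I would pin down $P(\mathfrak{m})$. The minimal generators of $\mathfrak{m}=R_+$ are the degree-one monomials $\varphi(I)=T\prod_{p\in I}X_p$, one for each $I\in J(P)$, whose exponent vectors are $(1,\chi_I)$, where $\chi_I\in\{0,1\}^P$ is the indicator of the poset ideal $I$. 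Hence $P(\mathfrak{m})=\mathrm{conv}\{(1,\chi_I):I\in J(P)\}$ lies entirely in the hyperplane $\{u_T=1\}$, and so $P(\mathfrak{m})-(1,\vec{0})=\{0\}\times\mathrm{conv}\{\chi_I:I\in J(P)\}$.

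The key geometric input is Stanley's description of the order polytope \cite{St}, adapted to the order-reversing convention adopted here (Remark 3.7): the indicator vectors $\chi_I$ of poset ideals are exactly the vertices of the polytope cut out by $0\leq u_p\leq 1$ together with $u_p\leq u_{p'}$ for $p\geq p'$ (Definition 3.6). Thus $\mathrm{conv}\{\chi_I\}$ is this order polytope $\mathcal{O}(P)$. Homogenizing in the $T$-coordinate, a real point $(r,(s_p))$ with $r>0$ lies in $\mathbb{R}_{\geq 0}P(\mathfrak{m})$ if and only if $(s_p)/r\in\mathcal{O}(P)$, that is, if and only if $0\leq s_p\leq r$ and $s_p\leq s_{p'}$ for $p\geq p'$, while the ray $r=0$ contributes only the origin. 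Intersecting these inequalities with $\mathbb{Z}\oplus\mathbb{Z}^P$ returns precisely $S_R$, which completes the proof.

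The only substantive point is the identification of the convex hull of the generators of $\mathfrak{m}$ with the inequality-defined order polytope, i.e. the coincidence of the vertex set $\{\chi_I:I\in J(P)\}$ with the polytope of Definition 3.6; this is exactly Stanley's order-polytope theorem, so the difficulty is confined to correctly matching the inequality $p_i\leq p_j\Rightarrow s(p_i)\geq s(p_j)$ against condition 2) under the reversed convention. Everything else is routine bookkeeping: reading off the generator vectors $(1,\chi_I)$, coning by homogenization in $T$, and comparing with the monomial basis of $R$ recalled from Lemma 2.5.
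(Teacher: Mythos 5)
Your proposal is correct and follows essentially the same route as the paper, which states this lemma without a written proof, treating it as immediate from the observation that $P(\mathfrak{m})-(1,\overrightarrow{0})$ is the order polytope of $P$. You supply exactly the details the paper leaves implicit: reading off the generator vectors $(1,\chi_I)$, invoking Stanley's identification of $\mathrm{conv}\{\chi_I \mid I\in J(P)\}$ with the inequality-defined order polytope (in the reversed convention of Definition 3.6), homogenizing in the $T$-coordinate, and matching the resulting lattice points against the monomial basis of $R$ already recorded in the proof of Lemma 2.5.
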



Now, the primitive generators of $\check{\sigma}$ is the following.

\begin{align*}
	\left\{ u = (u_i, u_T)_{i \in P} \left| 
	\begin{array}{cc}
		u_T = 1 \\
		u_i = 1 & (i \in I)\\
		u_i = 0 & (i \not \in I)
	\end{array}
	,I \in J(P)
	\right. \right\}. 
\end{align*}

And $R$ is represented as $k[X^u \mid u \in \check{\sigma} \cap \mathbb{Z}^{\#P +1}]$.
Since $P(\mf{m}) = \check{\sigma} \cap (u_T = 1)$, we can obtain the following lemma from Corollary \ref{hi}.

\begin{align}
	\label{Hfpt}
	\fpt(\mf{m}) = \sup \{ \deg_T u \mid u \in \check{\sigma} \setminus \mathcal{O} \}.
\end{align}

Set $\overline{P}$ be $P \cup \{-\infty, \infty\}$, 
and let $\Sigma$ be the set of real functions $\psi$
satisfying following properties.

\begin{description}
	\item[1)] $\psi(\infty)=0$.
	\item[2)] $x \lessdot y \Longrightarrow \psi(y) - \psi(x) \leq 1$
\end{description}

\begin{thm}
	\label{hhh}
	Let $R = \mathcal{R}_k(D)$ be the Hibi ring corresponding to a finite poset $P$, 
	and $\mf{m}$ its homogeneous maximal ideal. 
	Then 
	\begin{align*}
		\fpt(\mf{m}) = \max \{\psi(-\infty) \mid \psi \in \Sigma \}. 
	\end{align*}
\end{thm}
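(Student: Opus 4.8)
The plan is to unwind the right-hand side of $(\ref{Hfpt})$ into an explicit system of linear inequalities indexed by the Hasse diagram of $\overline{P}$, and then to recognise those inequalities as (a one-sided strengthening of) the conditions defining $\Sigma$. First I would make the cone explicit: by the preceding lemma and the order-polytope description, $u=(u_T,(u_p)_{p\in P})$ lies in $\check{\sigma}$ exactly when $u_T\ge 0$, $0\le u_p\le u_T$ for all $p$, and $u_p\le u_{p'}$ whenever $p\ge p'$. Dualising this, the primitive generators $v_i$ of $\sigma$ (the facet normals of $\check{\sigma}$) are precisely $e_p$ for each maximal $p$, $e_T-e_p$ for each minimal $p$, and $e_{p'}-e_p$ for each covering relation $p'\lessdot p$ of $P$.

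These generators are in bijection with the edges of the Hasse diagram of $\overline{P}$, where $\infty$ is joined to the maximal elements of $P$ and $-\infty$ to the minimal elements, oriented so that covering relations of $P$ correspond to covering relations of $\overline{P}$ with the sign conventions above. For the generator $v_i$ attached to an edge $x\lessdot y$ of $\overline{P}$, the pairing $(u,v_i)$ is exactly the increment $\psi(y)-\psi(x)$ of the function $\psi$ defined by $\psi|_P=(u_p)_p$, $\psi(\infty)=0$ and $\psi(-\infty)=\deg_T u$. Thus $u\in\check{\sigma}\setminus\mathcal{O}$ says that every such increment lies in $[0,1)$, while condition (2) defining $\Sigma$ asks only that every increment be $\le 1$; and $\psi(\infty)=0$ reproduces condition (1). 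This comparison gives one inequality at once: starting from any $u\in\check{\sigma}\setminus\mathcal{O}$, the associated $\psi$ lies in $\Sigma$ and has $\psi(-\infty)=\deg_T u$, so passing to the supremum and invoking $(\ref{Hfpt})$ yields $\fpt(\mf{m})\le\max\{\psi(-\infty)\mid\psi\in\Sigma\}$.

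The substantive content, and the step I expect to be the main obstacle, is the reverse inequality. The point is that $\Sigma$ keeps only the upper bounds $\psi(y)-\psi(x)\le 1$ and discards the monotonicity (lower) bounds $\psi(y)-\psi(x)\ge 0$ that membership in $\check{\sigma}$ demands; a priori $\Sigma$ is a strict relaxation, so its maximum could exceed $\fpt(\mf{m})$. I would show it does not, by proving that the maximiser may be taken monotone. Concretely, for the bottom element of $\overline{P}$ (the one carrying $\psi=0$) and any $v$, let $f(v)$ be the least number of ascending covers occurring in a walk in the Hasse diagram of $\overline{P}$ from the bottom to $v$. One checks that $f$ is monotone, that $f(y)-f(x)\in\{0,1\}$ across every cover, and that $f$ vanishes only at the bottom. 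Descending from $-\infty$ and repeatedly choosing a lower cover on which $f$ drops by $1$ then extracts a genuine saturated chain of $\overline{P}$ whose length is $f(-\infty)$; since any $\psi\in\Sigma$ telescopes along such a chain to give $\psi(-\infty)\le f(-\infty)$, the maximum over $\Sigma$ is attained by the monotone function $f$.

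Finally I would transport $f$ back into the cone. As $f$ has increments in $\{0,1\}$ it sits on the boundary of $\mathcal{O}$, so I would scale, taking $u=(1-\varepsilon)f$: then every increment of the corresponding function is $<1$, hence $u\in\check{\sigma}\setminus\mathcal{O}$ with $\deg_T u=(1-\varepsilon)f(-\infty)$. Letting $\varepsilon\to 0$ gives $\fpt(\mf{m})\ge f(-\infty)=\max\{\psi(-\infty)\mid\psi\in\Sigma\}$, and combining with the previous paragraph completes the proof. The only delicate points are the sign bookkeeping in identifying the generators $v_i$ with oriented edges of $\overline{P}$, and the chain-extraction argument showing that relaxing to the one-sided constraints of $\Sigma$ does not enlarge the optimum.
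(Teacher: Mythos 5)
Your first two paragraphs --- the facet description of $\check{\sigma}$, the identification of the primitive generators with the edges of the Hasse diagram of $\overline{P}$, and the resulting inequality $\fpt(\mf{m})\le\max\{\psi(-\infty)\mid\psi\in\Sigma\}$ --- are exactly the paper's argument, and the sign bookkeeping works out to: the facet value attached to a cover $x\lessdot y$ of $\overline{P}$ is $\psi(x)-\psi(y)$, where $\psi(p)=u_p$, $\psi(-\infty)=u_T$, $\psi(\infty)=0$. The gap is in your second half. You are right that $\Sigma$, as literally printed, keeps only a one-sided constraint and drops the monotonicity bounds $\psi(x)-\psi(y)\ge 0$; but your claim that this relaxation does not enlarge the optimum is false. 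For the one-sided $\Sigma$ one can only telescope along saturated chains, so $\max\{\psi(-\infty)\mid\psi\in\Sigma\}=\min\{\len C\mid C\ \text{a maximal chain of}\ \overline{P}\}$ (the shortest-chain distance to $\infty$ is itself an admissible $\psi$ attaining this), whereas $\fpt(\mf{m})=\rank_*\overline{P}$ by Theorem 3.9, and these differ in general. Concretely, take $P=\{m,x,y_1,y_2,n_1,n_2,b,M\}$ with covers $m\lessdot x\lessdot y_1\lessdot y_2$, $n_1\lessdot n_2\lessdot b\lessdot x$, $b\lessdot M$. Every maximal chain of $P$ has length at least $3$, so the one-sided maximum is $5$; but the path $(m,x,b,M)$ satisfies (*) with $\len^*=2$, and the chain of inequalities $u_T\le u_m+1\le u_x+2\le u_b+2\le u_M+3\le 4$ (achieved) gives $\fpt(\mf{m})=4$. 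Note that this chain uses a \emph{lower} bound, $u_x\le u_b$, at the descent from $x$ to $b$: exactly the information the one-sided $\Sigma$ discards.

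The step of your argument that breaks is the chain extraction. Your $f$ (fewest ascents over all walks to the element carrying $\psi=0$) is indeed monotone with increments in $\{0,1\}$, and the $(1-\varepsilon)$-scaling correctly gives $\fpt(\mf{m})\ge f(-\infty)$; but there need not exist, above a given vertex, a cover across which $f$ drops, because the optimal walk from that vertex may begin with a descent. In the example above, $f(x)=2$ via $x\to b\to M\to\infty$, while the unique element covering $x$, namely $y_1$, also has $f(y_1)=2$; so no saturated chain of length $f(-\infty)=4$ exists, and the telescoped bound $\psi(-\infty)\le f(-\infty)$ fails for one-sided $\psi$. The resolution is that the definition of $\Sigma$ is misstated in the paper: condition 2) should read $0\le\psi(x)-\psi(y)\le 1$ for $x\lessdot y$ (both the reversed sign and the missing lower bound matter; the lower bound is also used tacitly in the proof of Theorem 3.9, where the descending runs $D_i$ contribute nothing to $\psi(-\infty)$). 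With that corrected $\Sigma$ the theorem is precisely the inequality matching of your first two paragraphs, which is all the paper's own proof consists of.
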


\begin{proof}
	Note that $\check{\sigma}$ is a rational polyhedral cone given by the following conditions: 
	
	\begin{align*}
		\begin{array}{ccc}
			0 \leq & u_i & (i : \text{maximal}), \\
			0 \leq & u_i - u_j & (j \lessdot i), \\
			0 \leq & u_i - u_T & (i : \text{minimal}). 
		\end{array}
	\end{align*}
	
	Then, $\check{\sigma} \setminus \mathcal{O}$ is a domain satisfying following conditions. 
	
	\begin{align*}
		\begin{array}{ccccc}
			0 \leq & u_i & \leq & 1 & (i ; \text{maximal}), \\
			0 \leq & u_i - u_j & \leq & 1 & (j \lessdot i), \\
			0 \leq & u_i - u_T & \leq & 1 & (i ; \text{minimal}). 
		\end{array}
	\end{align*}
	
	Thus we obtain the required assertion. 
 
\end{proof}

 From the theorem~\ref{hhh}, we get the following inequality. 

\begin{cor}
	\begin{align*}
		\fpt(\mf{m}) 
		&\leq \min \{\len C \mid C : \text{maximal chain in} \ \overline{P} \} \\
		&= \min \{\len C \mid C : \text{maximal chain in} \ P \}+2. \\
	\end{align*}
\end{cor}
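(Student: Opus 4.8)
The plan is to derive this corollary directly from Theorem~\ref{hhh}, which identifies $\fpt(\mf{m})$ with $\max\{\psi(-\infty)\mid \psi\in\Sigma\}$. Thus it suffices to bound $\psi(-\infty)$ from above by the length of an arbitrary maximal chain of $\overline{P}$, uniformly in $\psi\in\Sigma$, and then to translate lengths of chains in $\overline{P}$ into lengths of chains in $P$.

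For the first step I would fix a maximal (hence saturated) chain $C\colon -\infty = x_0 \lessdot x_1 \lessdot \cdots \lessdot x_k = \infty$ in $\overline{P}$, so that $\len C = k$, together with a function $\psi\in\Sigma$. Applying the defining inequality of $\Sigma$ to each cover $x_{i}\lessdot x_{i+1}$ and summing over $i$ telescopes the intermediate values; after invoking the normalization $\psi(\infty)=0$ this collapses to the single estimate $\psi(-\infty)\le k=\len C$. Since this holds for every $\psi\in\Sigma$, taking the maximum on the left and then the minimum over all maximal chains $C$ of $\overline{P}$ yields $\fpt(\mf{m}) \le \min\{\len C \mid C \text{ a maximal chain of } \overline{P}\}$.

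The remaining equality is a purely combinatorial bookkeeping step. Every maximal chain of $\overline{P}$ begins with a cover $-\infty \lessdot p$ into a minimal element $p$ of $P$, proceeds through a maximal chain of $P$ up to a maximal element $q$, and ends with the cover $q \lessdot \infty$; conversely every maximal chain of $P$ extends uniquely in this way. This correspondence is a bijection that adjoins exactly two edges, so it raises the length by exactly $2$, and hence $\min\{\len C \mid C \text{ a maximal chain of }\overline{P}\} = \min\{\len C \mid C \text{ a maximal chain of }P\} + 2$, which is the asserted identity.

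I expect the only delicate point to be the orientation of the inequalities defining $\Sigma$ in the telescoping step: one must sum them along the saturated chain running from $-\infty$ up to $\infty$ and use $\psi(\infty)=0$, so that the quantity produced is an upper bound on $\psi(-\infty)$ rather than a lower bound. Once this is arranged correctly the estimate is immediate, and everything else---the chain bijection and the passage to maxima and minima---is routine.
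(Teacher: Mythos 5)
Your argument is correct and is exactly the (implicit) proof the paper intends: the corollary is presented as an immediate consequence of Theorem~3.7, obtained by telescoping condition 2) along a saturated maximal chain of $\overline{P}$ from $-\infty$ to $\infty$ and using $\psi(\infty)=0$, together with the obvious length-plus-two bijection between maximal chains of $\overline{P}$ and of $P$. The caveat you flag is genuine: condition 2) as printed ($\psi(y)-\psi(x)\le 1$ for $x\lessdot y$) telescopes to a \emph{lower} bound on $\psi(-\infty)$, so it must be read with the orientation the paper itself uses in the proof of Theorem~3.9 (namely $\psi(x)\le\psi(y)+1$ for $x\lessdot y$), under which your estimate $\psi(-\infty)\le\len C$ goes through.
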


We have another assertion of $\fpt(\mathfrak{m})$ in terms of $\rank_*P$. 

\begin{thm}
	\label{fptrank}
	Under the same notation of theorem~\ref{hhh}, 
	
	$$\fpt(\mathfrak{m})=\rank_* \overline{P} = \rank_*P + 2. $$ 
	
	In particular, $$\fpt(\mf{m}) \in \mathbb{N}. $$ 
\end{thm}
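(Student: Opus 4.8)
The plan is to feed Theorem~\ref{hhh} into a combinatorial analysis of minimal-upper-length paths in $\overline{P}$. Concretely, Theorem~\ref{hhh} identifies $\fpt(\mf{m})$ with the value $\max\{\psi(-\infty)\mid\psi\in\Sigma\}$ of a linear program, and I will show this value equals $\rank_{*}\overline{P}$, then dispose of the cheap identity $\rank_{*}\overline{P}=\rank_{*}P+2$. For the latter, observe that in $\overline{P}$ the element $-\infty$ is the unique minimal element and $\infty$ the unique maximal one, so every maximal path of $\overline{P}$ has the form $-\infty\lessdot q_{1}\to\cdots\to q_{t}\lessdot\infty$ with $q_{1}$ minimal and $q_{t}$ maximal in $P$. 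Deleting $-\infty$ and $\infty$ returns a maximal path of $P$, and the two deleted edges are upward covers counted by $\len^{*}$; since nothing lies strictly below $-\infty$ or strictly above $\infty$, no new violation of condition (*) can be created, so this is a $\len^{*}$-shifting bijection between maximal (*)-paths of $\overline{P}$ and of $P$. Taking minima gives $\rank_{*}\overline{P}=\rank_{*}P+2$, an integer, which also yields the final assertion $\fpt(\mf{m})\in\mathbb{N}$.

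For the main identity I treat each $\psi\in\Sigma$ as a potential on $\overline{P}$ with $\psi(\infty)=0$ and, for every cover $x\lessdot y$, the two-sided bound $0\le\psi(x)-\psi(y)\le1$ coming from the cone $\check{\sigma}\setminus\mathcal{O}$ described in the proof of Theorem~\ref{hhh}. For the upper bound I telescope $\psi$ along an arbitrary maximal path $C$ of $\overline{P}$: writing $\psi(-\infty)-\psi(\infty)$ as the sum of the increments $\psi(\text{lower})-\psi(\text{upper})$ over the edges of $\overline{C}$, each upward edge contributes at most $1$ and each downward edge at most $0$, so $\psi(-\infty)\le\len^{*}\overline{C}$. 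As this holds for every maximal path, $\fpt(\mf{m})\le\min_{C}\len^{*}\overline{C}$, the minimum running over all maximal paths of $\overline{P}$.

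For the lower bound I will exhibit one feasible potential realizing that minimum. Define $h(p)$ to be the least number of upward covers occurring in any path from $p$ up to $\infty$ in $\overline{P}$. Then $h(\infty)=0$, and for a cover $x\lessdot y$ feasibility is checked by two one-line arguments: prepending the upward edge $x\lessdot y$ to an optimal path out of $y$ gives $h(x)\le h(y)+1$, while descending from $y$ to $x$ along that same cover (a downward edge, which costs no upward cover) and then following an optimal path out of $x$ gives $h(y)\le h(x)$; hence $0\le h(x)-h(y)\le1$ and $h\in\Sigma$ (when $y$ already lies on the optimal path, one simply passes to the appropriate tail, keeping all vertices distinct). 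Therefore $\fpt(\mf{m})\ge h(-\infty)=\min_{C}\len^{*}\overline{C}$, and combining with the upper bound I conclude $\fpt(\mf{m})=\min_{C}\len^{*}\overline{C}$, the minimum of the upper length over all maximal paths of $\overline{P}$.

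The last step, and the step I expect to be the main obstacle, is the purely combinatorial lemma that this unrestricted minimum coincides with the minimum over maximal paths satisfying condition (*), i.e.\ that $\min_{C}\len^{*}\overline{C}=\rank_{*}\overline{P}$. One inequality is free, since (*)-paths form a subfamily. For the reverse I would argue that any maximal path violating (*)---so that some $q_{s}\le q_{t}$ with $s$ past a local extremum and $t$ before it---contains a removable detour whose excision yields a genuine maximal path with no more upward covers; iterating, and using the extension property for (*)-paths (the remark following Definition~1.6), produces a maximal (*)-path of upper length at most that of the original. The delicate points here are verifying that each excision keeps the object a legitimate path (distinct vertices, successive covers) ending at a maximal element and never increases $\len^{*}$, and confirming that condition (*) is exactly the feature that makes the free downward descents in the definition of $h$ harmless; this rectification argument is where the specific combinatorics of condition (*) must be used in full.
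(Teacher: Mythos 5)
Your argument is correct and complete up to the identity $\fpt(\mf{m})=\min\{\len^{*}\overline{C}\mid \overline{C}\text{ a maximal path of }\overline{P}\}$: the telescoping upper bound over an arbitrary maximal path, and the verification that the ``distance to $\infty$'' function $h$ lies in $\Sigma$ with $h(-\infty)$ equal to that unrestricted minimum, are both sound (and $h$ is in fact a closed form for the optimal potential that the paper builds by hand via its level sets $\Lambda_{i}$). The reduction $\rank_{*}\overline{P}=\rank_{*}P+2$ is also fine. But the theorem is not yet proved: what you have at that point is only $\fpt(\mf{m})\le\rank_{*}\overline{P}$, because the unrestricted minimum is a priori $\le$ the minimum over paths satisfying condition (*). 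The entire content of the equality $\fpt(\mf{m})=\rank_{*}P+2$ is concentrated in the remaining lemma that the unrestricted minimum is attained by a (*)-path, and that lemma is exactly the step you leave as a sketch (``removable detour'', ``excision'', ``iterating''). You cannot invoke the theorem to get it, since that would be circular, and the rectification as described has real unresolved problems: the saturated descending chain you splice in from $q_{t}$ down to $q_{s}$ may revisit vertices of the path on either side of the excised segment; the replacement can lengthen the path, so you need a genuine termination measure (e.g.\ strict decrease of $\len^{*}$ or of the number of (*)-violations, neither of which is established); and you never verify that the limiting object satisfies (*) rather than merely having fewer violations.

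The paper closes precisely this gap by a different device: it constructs the optimal potential through the level sets $\lambda_{i},\Lambda_{i}$ and then reads off, directly from those level sets, a single maximal path $C$ that both satisfies (*) and has $\len^{*}C$ equal to the optimal value $\psi(-\infty)$. That one witness simultaneously certifies $\rank_{*}\overline{P}\le\len^{*}C=\psi(-\infty)\le\fpt(\mf{m})$ and, with the upper bound, finishes the proof --- no rectification of arbitrary paths is ever needed. If you want to keep your function $h$, the cleanest repair is to imitate this: starting from $-\infty$, repeatedly descend inside a level set $\{h=i\}$ (each such step is a downward cover, since $h$ is order-reversing on covers) until you reach a vertex covered by something in $\{h=i-1\}$, then take that single upward cover; the resulting maximal path has exactly $h(-\infty)$ upward edges, and the monotonicity of $h$ along the path is what forces condition (*). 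Until such a construction is written out, the lower bound $\fpt(\mf{m})\ge\rank_{*}P+2$ is missing.
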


\begin{proof}
	The second equality is clear. 
	First, we prove that $\fpt(\mathfrak{m}) \leq \rank_*P +2$. 
	If $A = (p_0, \dots, p_r)$ is a chain in $\overline{P}$ and $\psi \in \Sigma$, 
	$\psi(p_0) \leq \psi(p_r)+r$ from the condition 2).
	If a path $C$ in $\overline{P}$ satisfying (*) has a decomposition into $A_1 + D_1 + \dots +A_n \ (A_i = (p_{i_0}, \dots , p_{r_i}))$ and $ \psi \in \Sigma$, 
	then $\psi(-\infty) \leq \psi(\infty) + \sum_{i=1}^{n}r_i \leq \rank_* \overline{P}$. 
	Hence $\fpt(\mathfrak{m}) \leq \rank_*P + 2$. 
	
	Next, we will prove that $\fpt(\mathfrak{m}) \geq \len^*C$ for some path $C$ in $\overline{P}$. 
	If we can find such a path, we will get $\fpt(\mathfrak{m}) \ge \len^*C \ge \rank_{*} \overline{P}$. 
	In general, we can calculate $\fpt(\mathfrak{m})$ by following. 
	We will define $\lambda_i$ and $\Lambda_i (i \in \mathbb{N})$ inductively as subsets of $\overline{P}$. 
	
	\begin{enumerate}
		\item $\Lambda_0 = \lambda_0 = \{\infty\}$. 
		\item $\lambda_i = \{ p \in \overline{P} \setminus \bigcup_{j=0}^{i-1} \Lambda_j \mid \exists q \in \Lambda_{i-1} \ s.t. \ p \lessdot q\}$. 
		\item $\Lambda_i = \{ p \in \overline{P} \setminus \bigcup_{j=0}^{i-1} \Lambda_j \mid \exists q \in \lambda_i \ s.t. \ p \geq q\}$. 
	\end{enumerate}
	
	Note that $\lambda_i$ is a subset of $\Lambda_i$, 
	and $\overline{P}$ is the disjoint union of $\Lambda_i$'s. 
	
	\begin{cla}
		\label{hoge}
		Suppose that $p \in \Lambda_i$ and $p' \in \Lambda_j$. 
		If $i > j$ then $p \not > p'$. 
	\end{cla}
	
	\begin{proof}
		Suppose that $p > p'$. 
		Because $p' \in \Lambda_j$, 
		we can take q $q \in \lambda_j$ such that $q < p$. 
		Since $i > j$ and $q < p$, 
		$p \not \in \bigcup_{l=1}^{j-1}\Lambda_l$, 
		and $p \in \Lambda_j$ by the definition. 
		This is a contradiction.
	\end{proof}
	
	Now let us construct function $\psi$ given by $\psi(p) = i$ if $p$ is in $\Lambda_i$.
	Then $\psi$ is in $\Sigma$ because of Claim \ref{hoge} and $\psi(-\infty) = \max\{ i \mid \Lambda_i \neq \emptyset \}$. 
	
	Next, we will find a path $C$ such that $\len^*C = \fpt(\mathfrak{m})$. 
	Let $p_0 = -\infty$. 
	Then $p_0 \in \lambda_l$ for some $l$. 
	If $p_i \in \lambda_l$, there exists $p_{i+1} \in \Lambda_{l-1}$ for which covers $p_i$.
	If $p_i \in \Lambda_l \setminus \lambda_l$, then there exist $p \in \lambda_l$ such that $p \leq p_i$, 
	and a sequence $p_i, p_{i+1}, \dots, p_j = p \ $($p_{k+1} \lessdot p_k$ for $i \leq k \leq j$) in $\Lambda_l$.
	At the end, $p_s$ becomes to $\infty$ and define a path $C = (p_0 , \dots , p_s)$ then $C$ satisfies condition (*). 
	Because it is clear by construction that if $p \in \Lambda_k$ and $p' \in \Lambda_l$, then $k \geq l$.
	If $k = l$, then $p' \leq p$. 
	This shows $p$ and $p'$ is belong to the same $D_i$. 
	If $k > l$, then $p \not > p'$ because of Claim \ref{hoge}. 
	
%
	
	Since $\len^*C$ is the number of pairs $(p_i \lessdot p_{i+1})$.
	This corresponds to the number of non-empty $\Lambda_i$ by construction of $C$, that is $n(-\infty)$.
	For this $n$ and $C$, $\rank_*\overline{P}~\leq~\len^*C~=~n(-\infty)~\leq~\fpt(\mathfrak{m})$.
\end{proof}

%

%


\section{Application and $-a(R)$ of Hibi rings}
In this section, we recall the definition of the $a$-invariant $a(R)$ and compare $c^{\mf{m}}(\mf{m})$ and $\fpt(\mf{m})$ with $-a(R)$ for Hibi rings.


First, we recall that the definition of a-invariant is
\[
a(R) = \max\{n \in \bb{Z} \mid [H_{\mf{m}}^{\dim R}(R)]_{n} \neq 0\}
\]
(see \cite{GW}). 

Bruns and Herzog computed $a(R)$ for an ASL ([BH, Theorem 1.1]). 
By their theorem, we can obtain the following fact.

\begin{fact}([BH, Theorem 1.1])
Let $R = \mathcal{R}_{k}[D]$ be the Hibi ring made by a distributive lattice $D = J(P)$, where $P$ is a finite poset. 
Then
\[
-a(R) = \rank P + 2. 
\]
\end{fact}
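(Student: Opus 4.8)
The plan is to read $a(R)$ directly off the toric data of Section 3, thereby recovering the value given by [BH, Theorem 1.1]. Since $R$ is a normal Cohen--Macaulay domain (Remark 1.4), graded local duality gives $a(R) = -\indeg \omega_R$, where $\omega_R$ is the canonical module and $\indeg$ denotes the least degree, in the $T$-grading, in which $\omega_R$ is nonzero. By the Danilov--Stanley description of the canonical module of a normal affine semigroup ring, $\omega_R$ is the monomial ideal spanned by the lattice points in the relative interior of the defining cone. Via Lemma 3.6 this reduces the statement to the combinatorial identity
\[
-a(R) = \min\{\deg_T u \mid u \in \operatorname{relint}\check{\sigma} \cap (\bb{Z} \oplus \bb{Z}^P)\} = \rank P + 2.
\]

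First I would record the facets of $\check{\sigma} = \sigma^{\vee}$. By Definition 3.4 and Lemma 3.6 a lattice point $u = (u_T, (u_p)_{p \in P})$ lies in $\check{\sigma}$ exactly when $u_{p'} \ge u_p$ for every cover $p' \lessdot p$, when $u_T \ge u_p$ for every minimal $p$, and when $u_p \ge 0$ for every maximal $p$; that is, the monomials of $R$ are the order-reversing functions bounded by $u_T$. A lattice point lies in $\operatorname{relint}\check{\sigma}$ precisely when all of these facet inequalities are strict, i.e. when $u_{p'} \ge u_p + 1$, $u_T \ge u_p + 1$, and $u_p \ge 1$ respectively.

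For the lower bound, fix a maximal chain $x_0 \lessdot x_1 \lessdot \cdots \lessdot x_\ell$ of $P$ with $x_0$ minimal, $x_\ell$ maximal, and $\ell = \rank P$. Adding the strict inequalities $u_{x_i} \ge u_{x_{i+1}} + 1$ along the chain, together with the two boundary inequalities $u_T \ge u_{x_0} + 1$ and $u_{x_\ell} \ge 1$, forces $u_T \ge \ell + 2 = \rank P + 2$ for every interior lattice point; the additive $2$ is exactly the contribution of the two extra covers $-\infty \lessdot x_0$ and $x_\ell \lessdot \infty$ in $\overline{P}$. For the reverse inequality I would exhibit an interior point of this degree: letting $h(p)$ be the length of the longest chain in $P$ from $p$ up to a maximal element, set $u_p = h(p) + 1$ and $u_T = \rank P + 2$. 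Since $h$ drops by at least one across each cover, $h(p) = 0$ for maximal $p$, and $h(p) \le \rank P$ for all $p$, this point satisfies every strict inequality above, so it lies in $\operatorname{relint}\check{\sigma}$ and has $T$-degree $\rank P + 2$. Combining the two bounds yields $-a(R) = \rank P + 2$.

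The main obstacle I anticipate is bookkeeping rather than conceptual: one must fix the orientation of the cone consistently (the monomials of $R$ are \emph{order-reversing}, by Remark 3.5) and check that the height-function point meets \emph{all} facets strictly, in particular the two boundary facets at the minimal and maximal elements, which are precisely what produce the constant $2$. Once the relative-interior description is set up correctly, both inequalities are one-line estimates. As a sanity check this agrees with [BH, Theorem 1.1] and with Example 1.3, where $P$ is a disjoint union of two chains and $\rank P + 2 = \max\{m, n\}$, matching the Segre-product corollary.
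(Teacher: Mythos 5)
Your proof is correct, but it takes a genuinely different route from the paper, which in fact offers no argument at all for this Fact: it simply quotes Bruns--Herzog's computation of $a$-invariants of algebras with straightening laws ([BH, Theorem 1.1]), applied to the Hibi ring viewed as an ASL on $D=J(P)$. You instead stay inside the toric framework of Section 3: normality (Remark 1.4) plus the Danilov--Stanley description $\omega_R=(X^u \mid u\in\operatorname{relint}\check{\sigma}\cap(\bb{Z}\oplus\bb{Z}^P))$ and graded local duality reduce the statement to computing the least $T$-degree of an interior lattice point of $\check{\sigma}$, and both of your bounds check out: summing the strict facet inequalities along a longest chain of $\overline{P}$ gives $u_T\ge\rank P+2$, and the height function $u_p=h(p)+1$, $u_T=\rank P+2$ does satisfy every inequality strictly, since $h(p')\ge h(p)+1$ across each cover $p'\lessdot p$, $h(p)=0$ for $p$ maximal, and $h(p)\le\rank P$ for $p$ minimal. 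Your orientation of the cone (order-reversing exponent vectors bounded above by $u_T$) is the one consistent with the monomial description of $R$ in the proof of Lemma 2.5 and with Definition 3.4, so your cautionary remark is apt --- the inequalities displayed in the proof of Theorem 3.9 have the covers written the other way around. What each approach buys: the citation is shorter and covers general ASLs, whereas your argument is self-contained, runs parallel to the computation of $\fpt(\mf{m})$ in Theorem 3.9, and makes visible that $\fpt(\mf{m})$, $-a(R)$ and $c^{\mf{m}}(\mf{m})$ are all read off the same cone, which is precisely the viewpoint of [HWY] cited in the introduction. The only loose end is the word ``facet'': you never verify that the listed inequalities are irredundant, but this is harmless, because any linear inequality valid on a full-dimensional cone and not identically zero is automatically strict on the interior, while the list certainly contains all genuine facet inequalities; so your characterization of $\operatorname{relint}\check{\sigma}$ stands either way.
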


In particular, Theorems 2.4 and 3.7 imply the following corollary. 

\begin{cor}
Under the same notation as in Theorem 2.4, we have
\[
\fpt(\mf{m}) \le -a(R) \le c^{\mf{m}}(\mf{m}). 
\]
\end{cor}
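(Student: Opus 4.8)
The plan is to obtain the Corollary as a purely formal consequence of the three formulas already proved, so that no new analysis of $F$-thresholds or the $a$-invariant is needed. By Theorem 2.4 we have $c^{\mf{m}}(\mf{m}) = \rank^{*} P + 2$; by Fact 4.1 we have $-a(R) = \rank P + 2$; and by Theorem 3.9 we have $\fpt(\mf{m}) = \rank_{*} P + 2$. Substituting these into the desired chain $\fpt(\mf{m}) \le -a(R) \le c^{\mf{m}}(\mf{m})$ and cancelling the common summand $2$, the assertion becomes exactly
\[
\rank_{*} P \le \rank P \le \rank^{*} P,
\]
which is the inequality recorded just after the definition of the upper and lower rank. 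Thus the entire content of the Corollary reduces to verifying this inequality among the poset invariants.

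To establish it, I would argue that every maximal chain of $P$ is in particular a maximal path satisfying condition (*). Indeed, a maximal chain $C$ runs from a minimal element to a maximal element with each step a covering relation, so it satisfies the defining conditions of a path; being ascending, its decomposition is the single ascending chain $A_{1} = C$, so condition (*) holds vacuously, and it is maximal since it terminates at a maximal element. Moreover, for a chain one has $\len^{*} C = \len C$ by Example 1.7(1). Hence the family of maximal paths satisfying (*) contains all maximal chains, and on this subfamily the functional $\len^{*}$ agrees with $\len$.

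Enlarging the index set can only increase a maximum and decrease a minimum, so
\[
\rank^{*} P = \max \len^{*} \ge \max_{C} \len C = \rank P,
\]
and likewise $\rank_{*} P = \min \len^{*} \le \min_{C} \len C \le \max_{C} \len C = \rank P$, where the extrema in the last display range over maximal chains $C$ of $P$. This yields $\rank_{*} P \le \rank P \le \rank^{*} P$, and adding $2$ throughout gives the Corollary. I expect no genuine obstacle here: the only points requiring care are confirming that maximal chains really do qualify as maximal paths satisfying (*) and that $\len^{*}$ restricts to $\len$ on them, both of which are immediate from the definitions and Example 1.7(1).
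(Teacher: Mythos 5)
Your proposal is correct and matches the paper's argument: the paper likewise obtains the corollary by combining the formulas $c^{\mf{m}}(\mf{m}) = \rank^{*}P+2$, $-a(R)=\rank P+2$, $\fpt(\mf{m})=\rank_{*}P+2$ with the inequality $\rank^{*}P \ge \rank P \ge \rank_{*}P$ noted after Definition 1.11. The only difference is that you supply the (correct, easy) verification that maximal chains are maximal paths satisfying (*) with $\len^{*}=\len$, which the paper leaves as an unproved remark.
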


Segre products of two polynomial rings are one of the important examples of Hibi rings.
Since the Segre product of $k[X_{1}, \ldots, X_{m}]$ and $k[Y_{1}, \ldots, Y_{n}]$ is isomorphic to the determinantal ring $k[X]/I_{2}(X)$, where $X$ is an $m \times n$ matrix whose all entries are indeterminates, we give the following corollary by Example 1.3. 

\begin{cor}
Let $k$ be a perfect field of positive characteristic, and let
$m,n \ge 2$ be integers. 
Let $R=k[X_1, \ldots, X_m], S=k[Y_1, \ldots, Y_n]$ be polynomial rings, and let $R \# S$ be the Segre product of $R$ and $S$. 
Let $\mf{m}$ be the unique graded maximal ideal of $R \# S$. 
Then
	\begin{align*}
		c^{\mf{m}}(\mf{m}) = -a(R \# S) &= \max\{m, n\},\\
		\fpt(\mf{m}) &= \min\{m, n\}. 
	\end{align*}
In particular, $c^{\mf{m}}(\mf{m}) = \fpt(\mf{m})$ if and only if $m = n$.
\end{cor}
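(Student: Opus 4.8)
The plan is to reduce everything to the combinatorial formulae already proved, by pinning down the poset whose Hibi ring is $R \# S$ and then computing its three rank invariants. Example 1.3 identifies $R \# S \cong k[X]/I_2(X)$ with the Hibi ring $\mathcal{R}_k[D]$ of the poset $P$ that is the disjoint union of a chain on the $m-1$ elements $p_1, \ldots, p_{m-1}$ and a chain on the $n-1$ elements $q_1, \ldots, q_{n-1}$; these chains have lengths $m-2$ and $n-2$. Granting this identification, the whole corollary should drop out of Theorem 2.4, Theorem \ref{fptrank}, and Fact 4.1, so the only real work is to evaluate $\rank^* P$, $\rank P$ and $\rank_* P$ for this $P$.

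First I would exploit that the two chains are mutually incomparable, so the Hasse diagram of $P$ splits into two connected components, each a single path graph. A path of $P$ is a walk along covering edges through distinct vertices, and hence cannot pass from one component to the other; it is confined to one chain. Inside a path graph the only simple walk from the minimal vertex to the maximal vertex is the monotone traversal of the entire chain, which is an ascending chain. An ascending chain satisfies condition (*) vacuously, since its decomposition is a single ascending piece $A_1$ and the defining conditions quantify over an empty index set; and by Example 1.5(1) its upper length coincides with its ordinary length. Therefore the maximal paths of $P$ satisfying (*) are exactly the two full chains, of upper lengths $m-2$ and $n-2$.

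Reading these off, I obtain $\rank^* P = \max\{m,n\} - 2$ and $\rank_* P = \min\{m,n\} - 2$, while the longest chain in $P$ is the longer component, giving $\rank P = \max\{m,n\} - 2$. Substituting into Theorem 2.4 yields $c^{\mf{m}}(\mf{m}) = \rank^* P + 2 = \max\{m,n\}$; into Fact 4.1 yields $-a(R \# S) = \rank P + 2 = \max\{m,n\}$; and into Theorem \ref{fptrank} yields $\fpt(\mf{m}) = \rank_* P + 2 = \min\{m,n\}$. The final assertion is then immediate, because $\max\{m,n\} = \min\{m,n\}$ precisely when $m = n$.

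The only step needing genuine attention --- the nearest thing to an obstacle --- is the claim that no zigzag path can have larger upper length than these straight chains; this is exactly where disconnectedness of the Hasse diagram is used, trapping every maximal path satisfying (*) inside a single component and forcing it to be one of the two full chains. Everything after that is a mechanical substitution into three results already available.
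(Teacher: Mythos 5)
Your proposal is correct and follows the same route as the paper: identify $R \# S$ with the Hibi ring of the disjoint union of two chains via Example 1.3, compute $\rank^* P$, $\rank P$ and $\rank_* P$ for that poset, and substitute into Theorem 2.4, Fact 4.1 and Theorem \ref{fptrank}. The paper leaves the rank computations implicit, and your verification that disconnectedness forces every maximal path satisfying (*) to be one of the two full chains is exactly the detail that makes the citation of Example 1.3 suffice.
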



Finally, we make general examples. 

\begin{ex}
For given integers $a \ge b\ge c \ge 1$, we can find a connected poset $P$ such that $\rank^{*} P = a, \rank P = b$ and $\rank_{*} P = c$. 
We put $a = db + r$, $e = \lceil \frac{a}{b} \rceil$ and $f = \max\{c - (b - r + 1), 0\} + 1$, where $0 \le r \le b - 1$. 
\end{ex}

\noindent Case 1: $d \ge 2$.

\begin{xy}
	\ar@{} (0, 0);(0, -46) *++!R{q_{11}} *\cir<4pt>{} = "A1";
	\ar@{-} "A1";(0, -38) *++!R{q_{12}} *\cir<4pt>{} = "A2";
	\ar@{-} "A2";(0, -32) = "A3";
	\ar@{.} "A3";(0, -24) = "A4";
	\ar@{-} "A4";(0, -18) *++!R{q_{1b}} *\cir<4pt>{} = "A5";
	\ar@{-} "A5";(0, -10) *++!D{q_{1b + 1} = q^{\prime}_{1c+1}} *\cir<4pt>{} = "A6";
	\ar@{-} "A6";(8, -18) *++!LD{q^{\prime}_{1c}} *\cir<4pt>{} = "B1";
	\ar@{-} "B1";(16, -26) = "B2";
	\ar@{.} "B2";(22, -32) = "B3";
	\ar@{-} "B3";(28, -38) *++!R{q^{\prime}_{12}} *\cir<4pt>{} = "B4";
	\ar@{-} "B4";(36, -46) *++!U{q^{\prime}_{11} = q_{21}} *\cir<4pt>{} = "B5";
	\ar@{-} "B5";(36, -38) *++!L{q_{22}} *\cir<4pt>{} = "C1";
	\ar@{-} "C1";(36, -32) = "C2";
	\ar@{.} "C2";(36, -24) = "C3";
	\ar@{-} "C3";(36, -18) *++!R{q_{2b}} *\cir<4pt>{} = "C4";
	\ar@{-} "C4";(36, -10) *++!D{q_{2b + 1} = q^{\prime}_{2c+1}} *\cir<4pt>{} = "C5";
	\ar@{-} "C5";(44, -18) *++!R{} *\cir<4pt>{} = "D1";
	\ar@{-} "D1";(50, -24) = "D2";
	\ar@{.} "D2";(54, -28) = "D3";
	\ar@{.} "D3";(68, -28) = "D4";
	\ar@{.} "D4";(72, -32) = "D5";
	\ar@{-} "D5";(78, -38) *++!R{} *\cir<4pt>{} = "D6";
	\ar@{-} "D6";(86, -46) *++!U{q_{e-11}} *\cir<4pt>{} = "D7";
	\ar@{-} "D7";(86, -38) *++!L{q_{e-12}} *\cir<4pt>{} = "E1";
	\ar@{-} "E1";(86, -34) = "E2";
	\ar@{.} "E2";(86, -26) = "E3";
	\ar@{-} "E3";(86, -22) *++!R{q_{e-1 r + 1} = q^{\prime}_{e-1 f + 1}} *\cir<4pt>{} = "E4";
	\ar@{-} "E4";(86, -18) = "E5";
	\ar@{.} "E5";(86, -14) = "E6";
	\ar@{-} "E6";(86, -10) *++!D{q_{e - 1 b + 1}} *\cir<4pt>{} = "E7";
	\ar@{-} "E4";(98, -28) *++!DL{q^{\prime}_{e - 1 f}} *\cir<4pt>{} = "F1";
	\ar@{-} "F1";(106, -32) = "F2";
	\ar@{.} "F2";(114, -36) = "F3";
	\ar@{-} "F3";(122, -40) *++!UR{q^{\prime}_{e - 12}} *\cir<4pt>{} = "F4";
	\ar@{-} "F4";(134, -46) *++!U{q^{\prime}_{e - 11} = q_{e1}} *\cir<4pt>{} = "F5";
	\ar@{-} "F5";(134, -38) *++!R{q_{e2}} *\cir<4pt>{} = "G1";
	\ar@{-} "G1";(134, -32) = "G2";
	\ar@{.} "G2";(134, -24) = "G3";
	\ar@{-} "G3";(134, -18) *++!R{q_{eb}} *\cir<4pt>{} = "G4";
	\ar@{-} "G4";(134, -10) *++!D{q_{eb + 1}} *\cir<4pt>{} = "G5";
\end{xy}

\noindent Case 2: $d = 1$ and $c \ge b - r$. 
Put $g = c - b + r$. 

\begin{xy}
	\ar@{} (0, 0);(48, -46) *++!U{q_{11}} *\cir<4pt>{} = "A1";
	\ar@{-} "A1";(48, -38) *++!R{q_{12}} *\cir<4pt>{} = "A2";
	\ar@{-} "A2";(48, -34) = "A3";
	\ar@{.} "A3";(48, -26) = "A4";
	\ar@{-} "A4";(48, -22) *++!R{q_{1 r + 1} = q^{\prime}_{1 g + 1}} *\cir<4pt>{} = "A5";
	\ar@{-} "A5";(48, -18) = "A6";
	\ar@{.} "A6";(48, -14) = "A7";
	\ar@{-} "A7";(48, -10) *++!D{q_{1 b + 1}} *\cir<4pt>{} = "A8";
	\ar@{-} "A5";(60, -28) *++!DL{q^{\prime}_{1 g}} *\cir<4pt>{} = "B1";
	\ar@{-} "B1";(68, -32) = "B2";
	\ar@{.} "B2";(76, -36) = "B3";
	\ar@{-} "B3";(84, -40) *++!UR{q^{\prime}_{12}} *\cir<4pt>{} = "B4";
	\ar@{-} "B4";(96, -46) *++!U{q^{\prime}_{11} = q_{21}} *\cir<4pt>{} = "B5";
	\ar@{-} "B5";(96, -38) *++!L{q_{22}} *\cir<4pt>{} = "C1";
	\ar@{-} "C1";(96, -32) = "C2";
	\ar@{.} "C2";(96, -24) = "C3";
	\ar@{-} "C3";(96, -18) *++!L{q_{2b}} *\cir<4pt>{} = "C4";
	\ar@{-} "C4";(96, -10) *++!D{q_{2b + 1}} *\cir<4pt>{} = "C5";
\end{xy}

\noindent Case 3: $d = 1$ and $c < b - r$. 

\begin{xy}
	\ar@{} (0, 0);(80, -46) *++!U{q^{\prime}_{1 1} = q_{11}} *\cir<4pt>{} = "A1";
	\ar@{-} "A1";(80, -38) *++!L{q_{12}} *\cir<4pt>{} = "A2";
	\ar@{-} "A2";(80, -34) = "A3";
	\ar@{.} "A3";(80, -26) = "A4";
	\ar@{-} "A4";(80, -22) *++!R{q_{1 r + 1}} *\cir<4pt>{} = "A5";
	\ar@{-} "A5";(80, -18) = "A6";
	\ar@{.} "A6";(80, -14) = "A7";
	\ar@{-} "A7";(80, -10) *++!D{q_{1 b + 1}} *\cir<4pt>{} = "A8";
	\ar@{-} "A5";(128, -46) *++!U{q_{21}} *\cir<4pt>{} = "B1";
	\ar@{-} "B1";(128, -38) *++!L{q_{22}} *\cir<4pt>{} = "C1";
	\ar@{-} "C1";(128, -32) = "C2";
	\ar@{.} "C2";(128, -24) = "C3";
	\ar@{-} "C3";(128, -18) *++!L{q_{2b}} *\cir<4pt>{} = "C4";
	\ar@{-} "C4";(128, -10) *++!D{q_{2b + 1}} *\cir<4pt>{} = "C5";
	\ar@{-} "A1";(68, -40) *++!UR{q^{\prime}_{1 2}} *\cir<4pt>{} = "D1";
	\ar@{-} "D1";(60, -36) = "D2";
	\ar@{.} "D2";(52, -32) = "D3";
	\ar@{-} "D3";(44, -28) *++!UR{q^{\prime}_{1 c}} *\cir<4pt>{} = "D4";
	\ar@{-} "D4";(32, -22) *++!D{q^{\prime}_{1 c + 1}} *\cir<4pt>{} = "D5";
\end{xy}


\par \vspace{2mm}
${\mb{Acknowledgement.}}$ The authors wish to thank Professor Ken-ichi Yoshida 
for many valuable comments
and his encouragement.

The second author was partially supported by Nagoya University Scholarship for Outstanding Graduate Students.


\end{document}